\font\teneufm=eufm10 \font\seveneufm=eufm7 \font\fiveeufm=eufm5
\newcommand{\C}{\mathbb{C}}
\newcommand{\Z}{\mathbb{Z}}
\newcommand{\g}{\mathfrak{g}}
\newcommand{\h}{\mathfrak{h}}
\DeclareMathOperator{\Supp}{Supp}
\DeclareMathOperator{\Span}{Span} \DeclareMathOperator{\tr}{tr}
 \DeclareMathOperator{\ad}{ad}
 \DeclareMathOperator{\Aut}{Aut}
\DeclareMathOperator{\Tr}{Tr}
\DeclareMathOperator{\Der}{Der}
\DeclareMathOperator{\End}{End}
\numberwithin{equation}{section}
\newtheorem{definition}{Definition}[section]
\theoremstyle{remark}
\newtheorem{remark}[definition]{Remark} 
\theoremstyle{plain} 
\newtheorem{theorem}[definition]{Theorem}
\newtheorem{lemma}[definition]{Lemma}
\newtheorem{corollary}[definition]{Corollary}
\newtheorem{proposition}[definition]{Proposition}
\def\Z{\mathbb Z}
\def\C{\mathbb C}
\begin{document}

\title{Exponentiation and Fourier transform of tensor modules of $\mathfrak{sl} (n+1)$}

\author{Dimitar Grantcharov and Khoa Nguyen}

\address{Department of Mathematics \\
         University of Texas at Arlington \\ Arlington, TX 76021, USA}

         \email{khoa.nguyen2@uta.edu}
         
\address{Department of Mathematics \\
         University of Texas at Arlington \\ Arlington, TX 76021, USA}

         \email{grandim@uta.edu}

\thanks{This work was partially supported by Simons Collaboration Grant 358245}

\maketitle

\begin{abstract}
With the aid of the exponentiation functor and Fourier transform we introduce a class of modules $T(g,V,S)$ of $\mathfrak{sl} (n+1)$ of mixed tensor type. By varying the polynomial $g$, the $\mathfrak{gl}(n)$-module $V$, and the set $S$, we obtain important classes of  weight modules over the Cartan subalgebra $\mathfrak h$ of $\mathfrak{sl} (n+1)$, and  modules that are free over $\mathfrak h$.  Furthermore, these modules are obtained through explicit presentation of the elements of $\mathfrak{sl} (n+1)$ in terms of differential operators and lead to new tensor coherent families of  $\mathfrak{sl} (n+1)$. An isomorphism theorem and simplicity criterion for $T(g,V,S)$ is provided. 

\medskip\noindent 2020 MSC: 17B10, 17B66 \\

\noindent Keywords and phrases: Lie algebra, tensor module, weight module

\end{abstract}

\section*{Introduction} 
There are two important, but opposite in nature, categories of modules of finite-dimensional reductive Lie algebras $\mathfrak a$. The first one consists of weight modules, namely, those that decompose into direct sums of their weight spaces relative to a fixed Cartan subalgegra $\mathfrak h$. The second one is the category of $\mathfrak h$-free modules.  The classification of the simple objects in these categories is far from reach unless one imposes an additional finiteness condition. In particular, simple weight $\mathfrak a$-modules with finite weight multiplicities   have been classified by O. Mathieu, \cite{M}, following works of  G. Benkart, D. Britten, S. Fernando, V. Futorny, A. Joseph, F. Lemire, and others. On the other hand, the classification of all simple  $\mathfrak h$-free modules of finite-rank is still an open problem and the only known case is when the rank equals one, \cite{Nil1}.

A crucial role in Mathieu's breakthrough paper, \cite{M} plays the new notion of \emph{coherent family}  -  a ``big'' weight module whose support coincides with the whole $\mathfrak h^*$. Coherent families have explicit geometric realizations via sections of vector bundles of algebraic varieties (called tensor coherent families), and also can be constructed purely algebraically through twisted localization of highest modules. The geometric realization is especially convenient in the case of $\mathfrak{sl} (n+1)$, when the modules  are tensor products $T(P,V) = P\otimes V$ of mixed type. More precisely, $P$ is a module over the algebra $\mathcal  D (n)$ of polynomial differential operators of $\mathcal O = {\mathbb C} [t_1,...,t_n]$ and $V$ is a module over the Lie algebra  $\mathfrak{gl}(n)$. As a result, we have an explicit presentation of the root elements of $\mathfrak{sl} (n+1)$ in terms of differential operator presentation. The tensor modules of mixed type $T(P,V)$ were  introduced by Shen, \cite{Sh} and Rudakov, \cite{R}, and play important role in the representation theory of various Lie algebras of derivations and vector fields, see for example, \cite{BF}, \cite{HCL}, \cite{LLZ}, \cite{TZ}, \cite{TZ2}.

Throughout the paper we fix $\mathfrak s = \mathfrak{sl} (n+1)$. One of the tools used in the study of $\mathfrak h$-free $\mathfrak s$-modules is the weighting functor $\mathcal W$. This functor maps an $\mathfrak h$-free module $M$ of finite rank to a coherent family $\mathcal M$ and raises a natural question about further connections between the categories of weight and $\mathfrak h$-free modules. A main purpose of the present paper is to make such connection and in particular to combine both types of modules together. This is done thanks to applying two functors on the tensor modules $T(\mathcal O,V)$. The two functors are exponentiation $\exp_g$ by a polynomial $g$, and a Fourier transform $\psi_S$  relative to a subset $S$ of $\{1,2,...,n\}$. As a result we define  the \emph{exponential tensor modules} $T(g,V,S)$. The case when $g=0$ and improper set $S$ relates to Mathieu's tensor coherent families. When $g=0$, by varying $S$ we obtain all injective partly-irreducible coherent families. In particular, every simple bounded $\mathfrak s$-module appears as a submodule of some coherent family of general type.

The case when $g$ has degree $1$ and $V$ is 1-dimensional leads to the complete list of $\mathfrak h$-free modules of rank $1$. Furthermore, we obtain interesting classes of Whittaker modules and weight modules relative to other Cartan subalgebras. Also, connections with the weighting functor  and Witten deformation of the de Rham complex are discovered. One of the main results  in the present paper are a simplicity criterion and an isomorphism theorem for $T(g,V,S)$. In particular, we provide new families of simple non-weight modules over $\mathfrak{sl} (n+1)$ obtained through explicit presentation of the Lie algebra in terms of differential operators. The main tools used in the proofs of these two results  are the twisted localization and translation functors.

 The content of the paper is as follows. In Section 2 we collect some preliminary results on the algebra $\mathcal D (n)$, the twisted localization functor, coherent families, and $\mathfrak h$-free modules. In particular, we obtain an explicit presentation of $\mathfrak s$ in terms of differential operators that depends on a set $S$ and a $\mathfrak{gl}(n)$-module $V$, and use this presentation to define  the exponential tensor modules $T(g,V,S)$. In Section 3, we use the localization technique to obtain  the simplicity criterion for $T(g,V,S)$ in the case of $1$-dimensional $V$ and $V = \bigwedge^k {\mathbb C}^n$. Using translation functors, in Section 4, we give a necessary and sufficient condition for $T(g,V,S)$ to be simple and also establish the conditions when two exponential tensor modules are isomorphic. Section 5 is devoted to applications of the exponential tensor modules in some particular cases. In the the case $g =0$ we obtain  injective coherent families. In the case $\deg g =1$ we discuss $\mathfrak h$-free modules, weight modules relative to different Cartan subalgebra, and Whittaker modules. \medskip

\section{Notation and Conventions}

Throughout the paper the ground field is $\mathbb C$ and $\mathbb C^* = \mathbb C  \setminus \{0 \}$. All vector spaces, algebras, and tensor products are assumed to be over $\mathbb C$ unless otherwise stated. 
We set $\llbracket k \rrbracket = \{1,2,...,k \}$ for a positive integer $k$. Throughout the paper, ${\mathfrak s} = \mathfrak{sl} (n+1)$ and $\g \simeq \mathfrak{gl} (n)$ is a fixed subalgebra of $\mathfrak s$ defined in Section \ref{sec-tran-fun}.  For a Lie algebra $\mathfrak a$ by $U(\mathfrak a)$ we denote the universal enveloping algebra of $\mathfrak a$ and  by $Z(\mathfrak a)$ the center of $U(\mathfrak a)$. 

Throughout the paper, $\mathcal O = \C [t_1,...,t_n]$ and ${\mathcal O}_0$ will stand for the maximal ideal of $\mathcal O$ generated by $t_1,...,t_n$. By ${\mathcal D} (n)$ we denote the algebra of polynomial differential operators of $\mathcal O$. We set $\partial_i :=\frac{\partial}{\partial t_i}$ and use the notation $t_i$ for the element in $ \End (\mathcal O)$ corresponding to multiplication by $t_i$. In particular, ${\mathcal D} (n)$ is the associative subalgebra of $ \End (\mathcal O)$ generated by $t_i$, $\partial_i$, $i=1,...,n$, subject to 
$$
t_it_j-t_jt_i = \partial_i \partial_j- \partial_j \partial_i = 0; \; \partial_i t_j- t_j \partial_i = \delta_{ij}.
$$

By $W_n$ we denote the Lie algebra $ \Der ( \mathcal O )$ of derivations of $\mathcal O$. Every element $w$ of $W_n$ can be written uniquely as $w= \sum_{i=1}^n f_i\partial_i$, for some $f_i \in \C[x_1,...,x_n]$.

Throughout the paper we use the multi-index notation. In particular, $\boldsymbol{t}^{\boldsymbol{\nu}} = t_1^{\nu_1}...t_n^{\nu_n}$, where 
$\boldsymbol{t} = (t_1,...,t_n)$ and $\boldsymbol{\nu} = (\nu_1,...,\nu_n)$. If $n$ is fixed, we set ${\mathbb C}[\boldsymbol{t}] = {\mathbb C}[t_1,...,t_n] = \mathcal O$, ${\mathbb C}[\boldsymbol{t}^{\pm 1}] = {\mathbb C}[t_1^{\pm 1},...,t_n^{\pm 1}]$, and $\boldsymbol{t}^{\boldsymbol{\nu}}  {\mathbb C}[\boldsymbol{t}^{\pm 1}] = t_1^{\nu_1}...t_n^{\nu_n}{\mathbb C}[t_1^{\pm 1},...,t_n^{\pm 1}]$, where the latter is the span of all (formal) monomials $t_1^{\nu_1+k_1}...t_n^{\nu_n + k_n}$, $k_i \in {\mathbb Z}$. 

	By $S_k$ we denote the symmetric group of $k$ letters.

\section{Preliminaries}

	\subsection{Basis of $\mathfrak{sl} (n+1)$} 
	Let $e_{i,j}$  stand for the $(i,j)$th elementary matrix of $\mathfrak{gl} (n+1)$. We fix the following basis of ${\mathfrak s} = \mathfrak{sl} (n+1)$:  
	$\left\{h_k, e_{i,j} \; | \; 1\leq i,j \leq n+1, i \neq j, k=1,...,n\right\}$, where $h_k := e_{k,k} - \frac{1}{n+1}\sum_{i=1}^{n+1} e_{i,i}$. In particular, $e_{i,i} - e_{j,j} = h_i-h_j$ if $1\leq i,j\leq n$ and  $e_{i,i} - e_{n+1,n+1} = h_i + \sum_{j=1}^n h_j$. Unless otherwise stated, whenever $e_{i,j}$ is used we assume that $i\neq j$. We also fix $\mathfrak b$ to be the Borel subalegbra of  ${\mathfrak s}$ spanned by $e_{i,j}$, $i<j$, and $h_k$, $k=1,..,n$, and $\mathfrak h = \bigoplus_{k=1}^n \mathbb C h_k$. We will say that a ${\mathfrak s}$-module $M$ is a \emph{weight module} if 
	$$
	M = \bigoplus_{\lambda \in \h^*} M^{\lambda},
	$$
	where $M^{\lambda} = \{m \in M \; | \; hm = \lambda(h)m, \mbox{ for every }h \in \h \}$. We call $\dim M^{\lambda}$ the \emph{$\lambda$-weight multiplicity} of $M$. If the set of all weight multiplicities is bounded we call the module \emph{bounded}. A weight module $M$ with finite weight multiplicities is called \emph{torsion free} if $e_{i,j}$ act injectively (hence, bijectively) on all weight spaces. Torsion free modules are particular examples of bounded modules since all weight multiplicities are equal. 
	
	 We note that in some cases will consider weight modules of $\mathfrak{sl} (n+1)$ relative to other Cartan subalgebras $\h'$.

	\subsection{Some automorphisms of $\mathfrak{sl} (n+1)$} 
	
	By $\tau$ we will denote the negative transpose on  $\mathfrak{gl} (n+1)$, i.e. $\tau(e_{i,j}) = -e_{j,i}$ and we use the same letter for the restriction of $\tau$ on $\mathfrak{sl} (n+1)$. Then  $\tau$ is an involutive automorphism. For ${\bf a} \in \left( {\mathbb C}^*\right)^{n+1}$, we set $\varphi_{\bf a} (e_{i,j}) = \frac{a_i}{a_j}e_{i,j}$. Then $\varphi_{\bf a}$ and $\tau$ are automorphisms of $\mathfrak{sl} (n+1)$ and  $\varphi_{\bf a} = \varphi_{\bf a'}$ is and only if ${\bf a'}  = c {\bf a}$ for some $c \in {\mathbb C}^*$.  By $F_{\tau}$ and $F_{\bf a}$ we denote the endofunctors on $\mathfrak g$-mod corresponding to the twists by $\tau$ and $\varphi_{\bf a}$, respectively. 
	
\subsection{Fourier transform on ${\mathcal D} (n)$}
The Fourier transform on  ${\mathcal D} (n)$ is an automorphism of ${\mathcal D} (n)$ defined by a subset $S$ of $\{1,2,...,n \}$ as follows:
\begin{eqnarray*}
\psi_S(t_i)  & = &  \partial_i, \; \psi_S(\partial_i)  =  -t_i,   \mbox{ if } i \in S,\\
\psi_S(t_j)  & = &  t_j, \; \psi_S(\partial_j)  =   \partial_j, \mbox{ if } j \notin S.
\end{eqnarray*}
In other words, $\psi_S$ is the $S$-induced map from the Fourier transform $t \mapsto \partial$, $\partial \mapsto -t$ on ${\mathcal D} (1)$.  If $M$ is a ${\mathcal D} (n)$-module, by $M^{\psi_S}$ we will denote the module obtained from $M$ after twisting by $\psi_S$.  We note that $\psi_S^4 = \mbox{Id}$.

\subsection{Exponentiation on ${\mathcal D} (n)$}
For an arbitrary polynomial $g \in \mathcal O$, we define the automorphism $\theta_{g}$ of ${\mathcal D} (n)$ via $\theta_{g} (t_i) = t_i$,  $\theta_{g} (\partial_i) = \partial_i + \frac{ \partial g}{ \partial t_i}$, for $i=1,...,n$.
We will call $ \theta_{g}$, the \emph{$g$-exponentiation} on ${\mathcal D} (n)$. Since $\theta_{g} = \theta_{g+c}$, \emph{we will assume that $g\in {\mathcal O}_0$ whenever $\theta_{g}$ is considered}.

 If $M$ is a ${\mathcal D} (n)$-module, by $M^{{\rm exp}_{g}}$ we will denote the modules obtained from $M$ after twisting by $\theta_{g}$. Alternatively,  $M^{\exp_g}$ can be thought as the space $M e^{g}$ with the natural action of $\mathcal D (n)$. In the special case when $g$ is a homogeneous linear polynomial $g = \sum_{i=1}^n b_it_i$, we will denote $ \theta_{g}$ and $M^{{\rm exp}_{g}}$ by  $\theta_{b}$ and  $M^{\exp_b}$, respectively, where ${b} = (b_1,...,b_n)$ is in ${\mathbb C}^n$.

\subsection{Twisted localization of ${\mathcal D} (n)$-modules and  $\mathfrak{sl} (n+1)$-modules}
We first recall some properties of the twisted localization functor in general. Let $\mathcal U$ be an associative unital algebra and $\mathcal H$ be a commutative subalgebra of $\mathcal U$. We assume in addition that $\mathcal H =  \C [\mathfrak h]$ for some vector space ${\mathfrak h}$, and that 
$$\mathcal U=\bigoplus_{\mu\in {{\mathfrak h}^*}}\mathcal U^\mu,$$
where
$$\mathcal U^\mu=\{x\in\mathcal U | [h,x]=\mu(h)x, \forall h\in\mathfrak h\}.$$

Let  $a$ be an ad-nilpotent element of $\mathcal U$. Then the set $\langle a \rangle = \{ a^n \; | \; n \geq 0\}$ is an Ore subset of $\mathcal U$  which allows us to define the  $\langle a \rangle$-localization $D_{\langle a \rangle} \mathcal U$ of $\mathcal U$. For a $\mathcal U$-module $M$  by $D_{\langle a \rangle} M = D_{\langle a \rangle} {\mathcal U} \otimes_{\mathcal U} M$ we denote the $\langle a \rangle$-localization of $M$. Note that if $a$ is injective on $M$, then $M$ is isomorphic to a submodule of $D_{\langle a \rangle} M$. In the latter case we will identify $M$ with that submodule.

We next recall the definition of the generalized conjugation of $D_{\langle a \rangle} \mathcal U$ relative to $x \in {\mathbb C}$. This is the automorphism  $\phi_x : D_{\langle a \rangle} \mathcal U \to D_{\langle a \rangle} \mathcal U$ given by $$\phi_x(u) = \sum_{i\geq 0} \binom{x}{i} \ad (a)^i (u) a^{-i}.$$ If $x \in \mathbb Z$, then $\phi_x(u) = a^xua^{-x}$. With the aid of $\phi_x$ we define the twisted module $\Phi_x(M) = M^{\phi_x}$ of any  $D_{\langle a \rangle} \mathcal U$-module $M$. Finally, we set $D_{\langle a \rangle}^x M = \Phi_x D_{\langle a \rangle} M$ for any $\mathcal U$-module $M$ and call it the \emph{twisted localization} of $M$ relative to $a$ and $x$. We will use the notation $a^x\cdot m$  (or simply $a^x m$) for the element in  $D_{\langle a \rangle}^x M$ corresponding to $m \in D_{\langle a \rangle} M$. In particular, the following formula holds in $D_{\langle a \rangle}^{x} M$:
$$
u (a^{x} m) = a^{x} \left( \sum_{i\geq 0} \binom{-x}{i} \ad (a)^i (u) a^{-i}m\right)
$$
for $u \in \mathcal U$, $m \in D_{\langle a \rangle}  M$.

We will apply the twisted localization functor for  $({\mathcal U}, {\mathcal H})$ in the following three cases:

 \noindent (i) $\mathcal U = {\mathcal D}(n)$,   $\mathfrak h = \bigoplus_{i=1}^n \left( \mathbb C \medskip x_i\partial_i \right)$;

 \noindent (ii) $\mathcal U = U(\mathfrak{sl}(n+1))$, $\mathfrak h =  \bigoplus_{i=1}^n \left( \mathbb C h_i \right)$; 

 \noindent (iii) $\mathcal U = U(\mathfrak{gl}(n))$, $\mathfrak h =  \bigoplus_{i=1}^n \left( \mathbb C E_{ii}\right)$.

In  case (i), for simplicity,  we will use the following notation:
$D_i^+ = D_{\langle t_i \rangle}$, $D_i^- = D_{\langle \partial_i \rangle }$. Also, for  $\mathcal U = {\mathcal D}(n)$ and a $\mathcal U$-module $M$, we set $D_{(i)}^+M = (D_i^+\mathcal U  / \mathcal U ) \otimes_{\mathcal U} M$ and $D_{(i)}^-M = (D_i^-\mathcal U  / \mathcal U ) \otimes_{\mathcal U} M$. In the particular case, when $t_i$ (respectively, $\partial_i$) acts injectively on $M$, then  $D_{(i)}^+M \simeq  D_i^+M/M$ (respectively, $D_{(i)}^-M \simeq  D_i^-M/M$). Also, we set $D_S^{+} = \prod_{i \in S} D_i^{+}$ and $D_{(S)}^{+} = \prod_{i \in S} D_{(i)}^{+}$.

In case (ii), we will often consider the following setting. If $\Sigma$ is a  set of commuting roots (i.e. $\alpha, \beta \in \Sigma$ implies $\alpha + \beta \notin \Sigma$) and $f_{\alpha} \in {\mathfrak s}^{-\alpha}$ for $\alpha \in \Sigma$, then we consider $D_{\Sigma} = \prod_{\alpha \in \Sigma} D_{\langle f_{\alpha} \rangle}$. Also, if $\Sigma$ is a linearly independent set, and $\mu=\sum_{\alpha} \mu_{\alpha} \alpha$, then we set  $D_{\Sigma}^{\mu} = \prod_{\alpha \in \Sigma} D_{\langle f_{\alpha} \rangle}^{\mu_{\alpha}}$.
 
\subsection{Modules of Nilsson} \label{subsec-nilsson}
Recall that  $\mathfrak h$ is the Cartan subalgebra of $\mathfrak g$ spanned by $h_i$, $i=1,...,n$. We 
identify $U(\mathfrak h)$ with ${\mathbb C} [{\bf h}] = {\mathbb C} [h_1,...,h_n]$.  Let $\sigma_i \in \Aut( {\mathbb C} [h] ) $ be defined by $\sigma_i( f(h_1,...,h_n))= f(h_1,...,h_i -1 ,...,h_n) $,  $i \in \llbracket n \rrbracket$. Then, following  \cite{Nil1}, for $S \subset \llbracket n \rrbracket$ and $b \in \mathbb{C}$ we define the $\mathfrak s$-module $M_{b}^{S}$ as follows. The underlying space of  $M_{b}^{S}$  is  $\mathbb{C}[{\bf h}]$ and the ${\mathfrak s}$-action is defined by 
	
	\begin{displaymath}
	\begin{array}{rcl}
	h_{k}\cdot f&:=&h_{k}f, \qquad  k\in \llbracket n \rrbracket;\\
	\\
	e_{i,n+1}\cdot f&:=&\begin{cases}
	(h_1+...+h_n + b)\sigma_{i}f, & i \in S,\\
	(h_1+...+h_n + b)(h_{i}-b-1)\sigma_{i}f, & i\not\in S;
	\end{cases}
	\\
	\\
	e_{n+1,j}\cdot f&:=&\begin{cases}
	-(h_{j}-b) \sigma_{j}^{-1}f, & j \in S,\\
	-\sigma_{j}^{-1}f,  & j\not\in S;
	\end{cases}
	\\
	\\
	e_{i,j}\cdot f&:=&\begin{cases}
	(h_{j}-b)    			 \sigma_{i}\sigma_{j}^{-1}f, & i,j \in S,\\
	\sigma_{i}\sigma_{j}^{-1}f, & i \in S, j\not\in S,\\
	(h_{i}-b-1)(h_{j}-b)    		 \sigma_{i}\sigma_{j}^{-1}f, & i \not\in S,j \in S,\\
	(h_{i}-b-1)    			 \sigma_{i}\sigma_{j}^{-1}f, & i,j\not\in S.\\
	\end{cases}
	\end{array}
	\end{displaymath}
One of the main results in \cite{Nil1} is that the modules $F_{\bf a} (M_{b}^{S})$ and  $F_{\tau}F_{\bf a} (M_{b}^{S})$, for  $S \subset \llbracket n \rrbracket$, ${\bf a}  \in \C^n$, form a skeleton in the category  of $\mathfrak g$-modules which are free of rank $1$ when restricted to $U(\mathfrak h)$. In this paper we will present the list of these modules as a particular case of exponential tensor modules. 
	
	\subsection{Weighting functor and coherent families} Coherent families of weight modules were introduced by O. Mathieu in \cite{M}. These families play crucial role in Mathieu's classification of all simple torsion free modules of ${\mathfrak s}$. 
	\begin{definition}
	 Let $U({\mathfrak s})^0$ be the centralizer of $\h$ in $U({\mathfrak s})$. A \emph{coherent ${\mathfrak s}$-family of degree $d$} is a weight ${\mathfrak s}$-module $\mathcal M$ such that:
	 \begin{itemize}
	 \item[(i)] $\dim \mathcal M^{\lambda} = d$ for every $\lambda \in \h^*$
	 \item[(ii)]  For any $u\in  U({\mathfrak s})^0$, the map $\lambda \mapsto \Tr \left(u|  \mathcal M^{\lambda} \right)$ is polynomial in $\lambda$.
	 \end{itemize}

	\end{definition}
	Note that since finitely generated bounded modules have finite length (Lemma 3.3 in \cite{M}), we can define the semisimplification $\mathcal M^{\rm ss}$ of a coherent family $\mathcal M$. Namely,  $\mathcal M^{\rm ss} = \bigoplus_{\lambda \in \mathfrak h^*/ \Z \Delta}\mathcal M [\lambda]^{\rm ss}$, where  $\mathcal M [\lambda]= \bigoplus_{\alpha \in \Z \Delta} \mathcal M^{\lambda + \alpha}$. 
	
	The definition of weighting functor $\mathcal{W}$ appeared first in \cite{Nil1} attributing the idea to O. Mathieu. For any module $M$ over ${\mathfrak s} = \mathfrak{sl}(n+1)$, the \emph{weighting $\mathcal{W}(M)$} of $M$ is a coherent family defined as follows. Let ${\rm Max}\, U(\h) $ denote the set of maximal ideals of $U(\h)$. Also, for $\lambda \in \h^*$ by $\overline{\lambda}: U(\h)  \to \C$ we denote the algebra homomorphism such that  $\overline{\lambda}|{\h} = \lambda$. Then 
	$$
	\mathcal{W}(M) := \bigoplus_{\mathfrak{m} \in {\rm Max}\, U(\h)} M/\mathfrak{m}M = \bigoplus_{\lambda \in \mathfrak{h}^*} M/ \ker(\overline{\lambda})M
	$$
	has an ${\mathfrak s}$-module structure via the action	 $x_{\alpha} \cdot (v+ \ker(\overline{\lambda})M) := (x_{\alpha} \cdot v) + \ker(\overline{\lambda+\alpha})M$, where $x_{\alpha}$ is in the $\alpha$-root space of ${\mathfrak s}$.

\subsection{Families of differential operator presentations of $\mathfrak{sl} (n+1)$ }
Recall that we assume $i \neq j$ whenever we write $e_{i,j}$.
\begin{proposition} \label{omega-v-s}
Let   $V$ be a $\mathfrak{gl} (n)$-module and  $S$  be a subset of $\llbracket n \rrbracket$. Then the correspondence
\begin{eqnarray*}
h_k &\mapsto & -t_k\partial_k \otimes 1 + 1\otimes E_{kk} -1\otimes 1, \mbox{ for }k\notin S\\
h_k  &\mapsto &  t_k\partial_k \otimes 1 + 1 \otimes E_{kk}, \mbox{ for }k\in S\\
e_{i,j} &\mapsto & 1 \otimes E_{ij} -t_j \partial_i \otimes 1,  \mbox{ for }  i,j \notin S\\
e_{i,j} &\mapsto & 1 \otimes E_{ij} + t_i\partial_j \otimes 1,  \mbox{ for }  i,j \in S\\
e_{i,j} &\mapsto & 1 \otimes E_{ij} + t_it_j \otimes 1,  \mbox{ for } i\in S, j\notin S\\
e_{i,j} &\mapsto & 1 \otimes E_{ij}  - \partial_i\partial_j \otimes 1,  \mbox{ for } i\notin S, j\in S\\
e_{n+1,j} &\mapsto &-t_j \otimes 1,  \mbox{ for } j\notin S\\
e_{n+1,j} &\mapsto & -\partial_j \otimes 1,  \mbox{ for } j\in S\\
e_{i,n+1} &\mapsto &-\sum_{j\notin S}\partial_j \otimes E_{ij} + \sum_{l \in S} t_l \otimes E_{il} + \sum_{j\notin S} t_j\partial_j\partial_i \otimes 1 - \sum_{l \in S}t_l \partial_l\partial_i \otimes 1 - \sum_{j=1}^{n} \partial_i \otimes E_{jj} \\
&& + ((n+1) - |S|)\partial_i \otimes 1,  \mbox{ for } i \notin S\\
e_{i,n+1} &\mapsto &- \sum_{j\notin S}\partial_j \otimes E_{ij} + \sum_{l \in S} t_l \otimes E_{il} - \sum_{j\notin S} t_it_j\partial_j \otimes 1 + \sum_{l \in S} t_it_l\partial_l \otimes 1 + \sum_{j=1}^{n} t_i \otimes E_{jj} \\
&& - (n -|S|)t_i \otimes 1,  \mbox{ for } i\in S.
\end{eqnarray*}
extends to a homomorphism $\omega_{V,S} : \mathfrak{sl} (n+1) \to {\mathcal D}(n) \otimes \End(V)$.
\end{proposition}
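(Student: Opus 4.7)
The plan is to reduce the proof to the case $S = \emptyset$ by means of the Fourier transform $\psi_S$. Indeed, $\psi_S$ extends to an algebra automorphism $\psi_S \otimes \Id$ of ${\mathcal D}(n) \otimes \End(V)$, and a direct inspection of the definition of $\psi_S$ shows that the formulas in the statement are precisely the images $(\psi_S \otimes \Id)\bigl(\omega_{V,\emptyset}(x)\bigr)$ for $x$ running over the listed basis elements of ${\mathfrak s}$. Since the composition of a Lie algebra homomorphism with an associative algebra automorphism is again a Lie algebra homomorphism, it suffices to check that $\omega_{V,\emptyset}$ is a Lie algebra homomorphism.

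For $S = \emptyset$ the formulas simplify to $\omega_{V,\emptyset}(h_k) = -t_k\partial_k \otimes 1 + 1 \otimes E_{kk} - 1 \otimes 1$, $\omega_{V,\emptyset}(e_{i,j}) = 1 \otimes E_{ij} - t_j\partial_i \otimes 1$ for $i,j \in \llbracket n \rrbracket$, $\omega_{V,\emptyset}(e_{n+1,j}) = -t_j \otimes 1$, and $\omega_{V,\emptyset}(e_{i,n+1}) = -\sum_j \partial_j \otimes E_{ij} + \sum_j t_j\partial_j\partial_i \otimes 1 - \sum_j \partial_i \otimes E_{jj} + (n+1)\partial_i \otimes 1$. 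This is the classical Shen--Rudakov presentation of the tensor module $T({\mathcal O}, V) = {\mathcal O} \otimes V$ from \cite{Sh}, \cite{R}, and can be verified directly using the commutation relations of ${\mathcal D}(n)$ and $\mathfrak{gl}(n)$; the key identities are $[h_k, e_{i,j}] = (\delta_{ki} - \delta_{kj})e_{i,j}$, $[e_{i,j}, e_{k,l}] = \delta_{jk}e_{i,l} - \delta_{il}e_{k,j}$, $[e_{i,n+1}, e_{n+1,j}] = e_{i,j}$ for $i \neq j$ in $\llbracket n \rrbracket$, and $[e_{i,n+1}, e_{n+1,i}] = h_i + \sum_{k=1}^n h_k$.

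For the Fourier reduction I would check each family of formulas case by case. For $k \in S$ one computes $\psi_S(-t_k\partial_k) = -\partial_k(-t_k) = t_k\partial_k + 1$, which accounts for the absence of the $-1 \otimes 1$ term in the formula for $h_k$ when $k \in S$. For $e_{i,j}$ with $i,j \in \llbracket n \rrbracket$, the four membership cases are dispatched by $\psi_S(-t_j\partial_i) = t_i\partial_j$ if $i,j \in S$, $= t_it_j$ if $i \in S$, $j \notin S$, and $= -\partial_i\partial_j$ if $i \notin S$, $j \in S$. The most delicate verification is for $e_{i,n+1}$: applying $\psi_S$ to each summand $t_j\partial_j\partial_i$ with $j \in S$ and normal-ordering produces a commutator correction $\pm \partial_i$ (or $\pm t_i$ if $i \in S$), and summing these over $j \in S$ yields precisely the $|S|$-correction that combines with the coefficient $n+1$ to give the stated coefficient $(n+1) - |S|$ (respectively $-(n - |S|)$ when $i \in S$). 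The main obstacle is tracking the sign and index-shift corrections in these $e_{i,n+1}$ formulas, but once each summand is tabulated the identification with the proposition is mechanical.
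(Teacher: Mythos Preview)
Your proof is correct and follows essentially the same strategy as the paper: reduce to a known base case via the Fourier transform. The only cosmetic difference is that the paper takes $S=\llbracket n\rrbracket$ as the base case (writing $\omega_{V,S}=(\psi_{\widehat S}^{\,3}\otimes 1)\,\omega_{V,\llbracket n\rrbracket}$ with $\widehat S=\llbracket n\rrbracket\setminus S$) and attributes that case to Shen and Rudakov, whereas you take $S=\emptyset$ as the base case and use $\omega_{V,S}=(\psi_S\otimes 1)\,\omega_{V,\emptyset}$; the paper in fact records this latter identity as well, so the two reductions are interchangeable.
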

\begin{proof}
The case when $S =   \llbracket n \rrbracket$ has been known for long time and usually is attributed to Rudakov, \cite{R}, and Shen, \cite{Sh}. The case of arbitrary $S$ follows from  $S =   \llbracket n \rrbracket$ by applying appropriate Fourier transform. Namely,  $\omega_{V,S} = (\psi_{\widehat{S}}^3 \otimes 1) \omega_{V, \llbracket n \rrbracket }$, where $\widehat{S} = \llbracket n \rrbracket \setminus S$.
\end{proof}
\subsubsection{The case $S=\emptyset$} \label{subsec-empty} We will denote $\omega_{V,\emptyset}$ by $\omega_{V}$. In this case we have that 
\begin{eqnarray*}
h_k &\mapsto & -t_k\partial_k \otimes 1 + 1\otimes E_{kk} -1\otimes 1, \mbox{ for all }k,\\
e_{i,j} &\mapsto & 1 \otimes E_{ij} -t_j \partial_i \otimes 1,  \mbox{ for all }  i \neq j, \\
e_{n+1,j} &\mapsto &-t_j \otimes 1,  \mbox{ for all } j,\\
e_{i,n+1} &\mapsto &-\sum_{j=1}^n \partial_j \otimes E_{ij} + \sum_{j=1}^n  t_j\partial_j\partial_i \otimes 1 - \sum_{j=1}^{n} \partial_i \otimes E_{jj} + (n+1)\partial_i \otimes 1,  \mbox{ for all } i.
\end{eqnarray*}
One easily checks that $\omega_{V,S} = (\psi_S \otimes 1) \omega_{V}$. Furthermore, the above correspondence define a homomorphism $\omega : U(\mathfrak{sl} (n+1)) \to {\mathcal D}(n) \otimes U(\mathfrak{gl} (n))$ that will play important role in Section \ref{sec-tran-fun}.
\subsubsection{The case $S=\llbracket n \rrbracket$} In this other ``extreme'' case we have the following presentation:
\begin{eqnarray*}
h_k &\mapsto & t_k\partial_k \otimes 1 + 1\otimes E_{kk}, \mbox{ for all }k,\\
e_{ij} &\mapsto & 1 \otimes E_{ij} + t_i\partial_j \otimes 1,  \mbox{ for all }  i \neq j, \\
e_{n+1,j} &\mapsto & -\partial_j \otimes 1,  \mbox{ for all } j,\\
e_{i,n+1} &\mapsto &  \sum_{l=1}^n t_l \otimes E_{il} + \sum_{l =1}^n t_it_l\partial_l \otimes 1 + \sum_{j=1}^{n} t_i \otimes E_{jj}, \mbox{ for all } i.
\end{eqnarray*}
\subsection{Exponential tensor modules}
For a $ \mathcal D(n)$-module $P$, a subset $S$ of  $\llbracket n \rrbracket$, and a  $\mathfrak{gl}(n)$-module $V$, by $T(P,V)$ we denote the space $P \otimes V$ considered as a module over ${\mathfrak s} = \mathfrak{sl}(n+1)$ through the homomorphism $\omega_{V} = \omega_{V,\emptyset}$. In particular, the ${\mathfrak s}$-module 
with underlying space $P \otimes V$ obtained from the homomorphism $\omega_{V,S}$ is isomorphic to $T(P^{\psi_S},V)$. 

We will pay special attention at the case when $P = (\mathcal O^{\psi_S})^{\exp_g} = e^{g} \left( \mathbb C[\bf t] \right)^{\psi_S}$, where $g$ is a polynomial in $\mathbb C[\bf t]$. In this case, we will call 
$$
T(g, V,S) = T\left( \left(  \left( \mathbb C[{\bf t}]\right)^{\psi_S}\right)^{\exp_g}, V, \emptyset\right) = T(e^{g} \left( \mathbb C[\bf t] \right)^{\psi_S}, V) .
$$
\emph{exponential tensor module} corresponding to $g$, $V$, and $S$. In the case when $g = \sum_{i=1}^nb_nt_n$, we set $T(b, V,S) = T(g, V,S) $ where $b \in {\mathbb C}^n$.  In the case when $g=0$ the modules $T(0, V,S)$ can be considered as Fourier transforms of the classical tensor modules studied originally by Rudakov, Shen, and others.  The modules $T(0, V,S)$ play important role in the classification of simple torsion free ${\mathfrak s}$-modules of Mathieu, \cite{M}, as they are parts of coherent families defined in the next subsection. If $g \neq 0$, the modules $T(g, V,S)$ are  not weight modules, as the following statement shows. The proof follows directly from the definition of $\omega_V$, see \S \ref{subsec-empty}.

\begin{lemma} \label{lem-weight}
The module $T(g, V,S)$ is a weight $\mathfrak{sl}(n+1)$-module if and only if $g=0$ and $V$ is a weight $\mathfrak{gl}(n)$-module.
\end{lemma}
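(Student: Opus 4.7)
The plan is to split the equivalence into its two directions and exploit the explicit formula for the Cartan action via $\omega_V$ from \S\ref{subsec-empty}.

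For the reverse direction, assume $g=0$ and $V = \bigoplus_\mu V^\mu$ is a weight $\mathfrak{gl}(n)$-module. On $T(0,V,S) = \mathbb{C}[\boldsymbol{t}]^{\psi_S}\otimes V$ each $h_k$ acts via $\omega_V$ as $-t_k\partial_k\otimes 1 + 1\otimes(E_{kk}-1)$. The operator $-t_k\partial_k$ acts on $\mathbb{C}[\boldsymbol{t}]^{\psi_S}$ diagonally on the monomial basis, with eigenvalue $-\nu_k$ when $k\notin S$ and $\nu_k+1$ when $k\in S$ (since $\psi_S(t_k\partial_k) = -(t_k\partial_k+1)$). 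Combined with a weight basis of $V$, the elements $\boldsymbol{t}^{\boldsymbol{\nu}}\otimes v^\mu$ form a simultaneous $\mathfrak{h}$-eigenbasis, giving the desired weight decomposition.

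For the forward direction, write $h_k = A_k\otimes 1 + 1\otimes(E_{kk}-1)$ on $\mathbb{C}[\boldsymbol{t}]\otimes V$, where $A_k\in\End(\mathbb{C}[\boldsymbol{t}])$ denotes the twisted action of $-t_k\partial_k$ on $(\mathbb{C}[\boldsymbol{t}]^{\psi_S})^{\exp_g}$. Since $A_k\otimes 1$ and $1\otimes E_{kk}$ act on disjoint tensor factors they commute, so the Jordan decomposition of $h_k$ splits additively; semisimplicity of $h_k$ thus forces $A_k$ and $E_{kk}$ to be separately semisimple, and the latter condition for every $k$ is exactly the statement that $V$ is weight. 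To see that $g=0$ is also necessary I would first handle $S=\emptyset$, where $A_k = -t_k\partial_k - t_k\,\partial g/\partial t_k$, so that $\sum_k A_k = -(E+E')$ with $E = \sum_k t_k\partial_k$ the Euler operator and $E' = \sum_{\boldsymbol{\alpha}}|\boldsymbol{\alpha}|\,c_{\boldsymbol{\alpha}}\,\boldsymbol{t}^{\boldsymbol{\alpha}}$ for $g = \sum c_{\boldsymbol{\alpha}}\boldsymbol{t}^{\boldsymbol{\alpha}}\in\mathcal{O}_0$; note $E'\neq 0$ whenever $g\neq 0$. For any nonzero $f\in\mathbb{C}[\boldsymbol{t}]$ of total degree $N$, the top-degree part of $(E+E')f$ equals $(E')_{\deg g}\cdot f_N$, a nonzero polynomial of degree $N+\deg g > N$ by the integral-domain property; hence $(E+E')f$ cannot be a scalar multiple of $f$, and $E+E'$ admits no polynomial eigenvectors. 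Using that $V$ is weight to diagonalize $\sum_k E_{kk}$, the operator $\sum_k h_k$ restricts on each $\mathbb{C}[\boldsymbol{t}]\otimes V^\mu$ to a scalar shift of $-(E+E')$ and so fails to be semisimple, contradicting the weight hypothesis.

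The main obstacle is extending the degree-counting argument to general $S\neq\emptyset$: after composition with $\psi_S$ the operator $\sum_k A_k$ becomes a mixed multiplication--differentiation operator, and the clean ``raises total degree'' bound no longer holds. The plan here is to invoke the functorial identity $\omega_{V,S} = (\psi_S\otimes 1)\omega_V$ from \S\ref{subsec-empty} to transfer the computation to the Fourier-twisted polynomial ring, replacing the total-degree filtration with the filtration induced by $\psi_S$; alternatively, one can single out an individual Cartan element $h\in\mathfrak{h}$ (instead of $\sum_k h_k$) whose twisted action is amenable to a top-term comparison relative to a grading adapted to the partition $S\sqcup\widehat{S}$ of $\llbracket n\rrbracket$.
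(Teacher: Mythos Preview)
The paper's ``proof'' is a single sentence pointing to the formulas in \S\ref{subsec-empty}, so there is nothing to compare your argument against; you are supplying the details yourself.

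Your reverse direction is fine. In the forward direction there are two problems. The minor one is the Jordan--decomposition step: on the infinite-dimensional space $\mathbb{C}[\boldsymbol t]$ the operator $A_k$ need not admit a Jordan decomposition, so you cannot conclude that semisimplicity of $A_k\otimes 1+1\otimes(E_{kk}-1)$ forces each summand to be semisimple. (This is easily repaired: prove $g=0$ first, and then the constant $1\in\mathbb{C}[\boldsymbol t]$ spans an $A_k$-eigenspace for every $k$, so $\mathbb{C}\cdot 1\otimes V$ is $\mathfrak h$-stable and $h_k$ acts there as $E_{kk}+\mathrm{const}$; this gives $V$ weight without any appeal to Jordan decomposition.)

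The major problem is that your ``main obstacle'' for $S\neq\emptyset$ is not a technicality but a genuine obstruction: with the order of twists \emph{as written} in the paper, namely $P=((\mathbb{C}[\boldsymbol t])^{\psi_S})^{\exp_g}$, the forward implication is false. For instance, take $S=\llbracket n\rrbracket$ and $g=\sum_ib_it_i$ linear. Then on $P$ one has $t_i\mapsto\partial_i$ and $\partial_i\mapsto -t_i+b_i$, so the substitution $t_i\mapsto t_i+b_i$ gives a $\mathcal D(n)$-isomorphism $P\simeq\mathbb{C}[\boldsymbol t]^{\psi_{\llbracket n\rrbracket}}$; hence $T(g,V,\llbracket n\rrbracket)\simeq T(0,V,\llbracket n\rrbracket)$ is a weight module even though $g\neq 0$. (One checks similarly that $T(t_1t_2,V_a,\{1,2\})$ is weight over $\mathfrak{sl}(3)$.) This is inconsistent with Theorem~\ref{thm-h-free}, which asserts that $T(\mathbf b,V,S)$ is $\mathfrak h$-free; indeed, the explicit isomorphism given there intertwines the $h_k$-action only if one uses the \emph{opposite} order of twists, $P=((\mathbb{C}[\boldsymbol t])^{\exp_g})^{\psi_S}$. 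With that corrected definition your $S=\emptyset$ degree argument extends verbatim to all $S$: for any $k$ with $\partial_kg\neq 0$ one gets $A_k=\pm t_k\partial_k\pm t_k\partial_kg+\mathrm{const}$, and the multiplication term $t_k\partial_kg$ strictly raises $t_k$-degree, so $h_k$ has no eigenvectors in $\mathbb{C}[\boldsymbol t]\otimes V$. In short, you could not close the gap because, as stated, the lemma does not hold; the intended definition almost certainly swaps the two twists.
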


\subsection{Tensor coherent families} \label{ten-coh-fam}

Let $V$ be a finite dimensional $\mathfrak{gl}(n)$-module and let $S \subset \llbracket n \rrbracket$. 
Note that the space ${\mathcal F}_{\lambda} = {\bf t}^{\lambda} \C [{\bf t}^{\pm 1}]$ has a natural structure of a $\mathcal D (n)$-module. Moreover, ${\mathcal F}_{\lambda} = {\mathcal F}_{\mu}$ if and only if $\lambda - \mu \in \Z^n$. Hence, we may d efine  ${\bf t}^{\lambda} \C [{\bf t}^{\pm 1}]$ for $\lambda \in {\mathbb C}^n/\Z^n$. Let $\mathcal T (V,S) = \bigoplus_{\lambda \in {\mathbb C}^n/\Z^n} T(\left( {\bf t}^{\lambda} \C [{\bf t}^{\pm 1}] \right)^{\psi_S}, V)$. Then one easily checks the following.
\begin{proposition}
The ${\mathfrak s}$-module $\mathcal T (V,S)$ is a coherent family of degree $\dim V$.
\end{proposition}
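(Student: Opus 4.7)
The plan is to verify the two axioms of a coherent family of degree $\dim V$. For axiom (i), I would use the isomorphism $T((\boldsymbol{t}^\lambda \C[\boldsymbol{t}^{\pm 1}])^{\psi_S}, V) \simeq (\boldsymbol{t}^\lambda \C[\boldsymbol{t}^{\pm 1}]) \otimes V$ equipped with the $\mathfrak{s}$-action through $\omega_{V,S}$. Reading off $\omega_{V,S}(h_k)$ from Proposition~\ref{omega-v-s}, a pure tensor $\boldsymbol{t}^\mu \otimes v$ with $v$ of $\mathfrak{gl}(n)$-weight $\nu$ has $h_k$-weight equal to $\mu_k + \nu_k$ for $k \in S$ and $-\mu_k + \nu_k - 1$ for $k \notin S$. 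For any $\xi \in \mathfrak h^*$ and any weight $\nu$ of $V$, these equations determine a unique $\mu(\nu) \in \C^n$ and hence a unique coset $\lambda = \mu(\nu) + \Z^n$, so
$$
\mathcal T(V,S)^\xi = \bigoplus_\nu \boldsymbol{t}^{\mu(\nu)} \otimes V^\nu
$$
as vector spaces, giving $\dim \mathcal T(V,S)^\xi = \dim V$ for every $\xi$.

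For axiom (ii), I would introduce the $\Z^n$-grading $\mathcal D(n) = \bigoplus_\alpha \mathcal D(n)_\alpha$ defined by $\deg t_k = e_k$, $\deg \partial_k = -e_k$, together with the weight decomposition $\End(V) = \bigoplus_\beta \End(V)_\beta$. Using $[t_k\partial_k, D_\alpha] = \alpha_k D_\alpha$ and $[E_{kk}, A_\beta] = \beta_k A_\beta$, a short calculation shows that $D_\alpha \otimes A_\beta$ commutes with $\omega_{V,S}(h_k)$ for every $k$ iff $\alpha_k = -\beta_k$ for $k \in S$ and $\alpha_k = \beta_k$ for $k \notin S$; this is precisely the condition $\mu(\nu) + \alpha = \mu(\nu + \beta)$. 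Consequently, such a bi-homogeneous operator sends the basis vector $\boldsymbol{t}^{\mu(\nu)} \otimes v_j^\nu$ of $\mathcal T(V,S)^\xi$ to a linear combination of basis vectors indexed by the shifted weight $\nu + \beta$.

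Since $\omega_{V,S}(u)$ lies in the centralizer of $\omega_{V,S}(\mathfrak h)$ for $u \in U(\mathfrak s)^0$, decomposing it into bi-homogeneous pieces shows that only the $(\alpha,\beta) = (0,0)$ part contributes to $\Tr(u\,|\,\mathcal T(V,S)^\xi)$, since all other pieces have $\beta \neq 0$ and hence permute basis vectors across distinct weights of $V$. Using the identity $\mathcal D(n)_0 = \C[t_1\partial_1,\ldots,t_n\partial_n]$ (via the falling-factorial formula $t_k^m \partial_k^m = t_k\partial_k\,(t_k\partial_k - 1)\cdots (t_k\partial_k - m + 1)$), this component has the form $\sum_i p_i(t_1\partial_1,\ldots,t_n\partial_n) \otimes A_i$ with $A_i \in \End(V)_0$, and its trace on $\mathcal T(V,S)^\xi$ equals
$$
\sum_i \sum_\nu p_i(\mu(\nu))\,\Tr(A_i\,|\,V^\nu).
$$
This is polynomial in $\xi$, since each $\mu(\nu)$ is an affine function of $\xi$ and the sum over the finitely many $\mathfrak{gl}(n)$-weights of $V$ is finite. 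The only non-mechanical step is the sign analysis identifying the commutant of $\omega_{V,S}(\mathfrak h)$ inside the bi-graded algebra $\mathcal D(n) \otimes \End(V)$, but this is a direct verification from the formulas of Proposition~\ref{omega-v-s}; everything else is routine bookkeeping.
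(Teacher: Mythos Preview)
Your argument is correct and fills in precisely the details the paper omits (the paper simply asserts the result with ``one easily checks''). Both axioms are verified cleanly: the weight-space computation for (i) is right, and for (ii) your key observation---that a bi-homogeneous component $D_\alpha\otimes A_\beta$ in the centralizer of $\omega_{V,S}(\mathfrak h)$ has $\alpha$ determined by $\beta$, so the nonzero $\beta$ pieces are block off-diagonal on $\mathcal T(V,S)^\xi$ and only the $(0,0)$ piece contributes to the trace---is exactly what makes the polynomiality transparent.
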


\section{Localization of exponential tensor modules}
In this section we obtain some important results on localization of the exponential tensor modules $T(g,V,S)$. These results will help us to establish simplicity criteria for $T(g,V,S)$ in some particular cases of $V$.

\begin{lemma} \label{lem-loc-exp}
If $P = {\mathbb C}[{\bf t}]$ is the defining representations of ${\mathcal D}(n)$, then
$$
e^g P^{\psi_S} \simeq D_{(S)}^+ (e^gP) \simeq e^g D_{(S)}^+ (P).
$$
\end{lemma}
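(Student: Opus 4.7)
The plan is to prove the two isomorphisms in order, beginning with the easier second one.

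\textbf{The second isomorphism.} Since $\theta_g(t_i)=t_i$ for every $i$, the automorphism $\theta_g$ preserves each Ore subset $\langle t_i \rangle$ and extends canonically to an automorphism of $D_S^+\mathcal{D}(n)$. Consequently, for any $\mathcal{D}(n)$-module $M$ on which the $t_i$ ($i \in S$) act injectively, twisting by $\theta_g$ commutes with both the localization and the subsequent quotient, giving $(D_{(S)}^+ M)^{\theta_g} \simeq D_{(S)}^+(M^{\theta_g})$. Applied to $M=P$ and using the identification $M^{\theta_g}=e^g M$, this yields $D_{(S)}^+(e^g P) \simeq e^g D_{(S)}^+(P)$ at once.

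\textbf{The first isomorphism.} The same commutation principle reduces $e^g P^{\psi_S}\simeq D_{(S)}^+(e^g P)$ to the untwisted case $P^{\psi_S}\simeq D_{(S)}^+ P$. From the iterated definition $D_{(S)}^+ = \prod_{i \in S} D_{(i)}^+$, the module $D_{(S)}^+ P$ has basis $\{\boldsymbol{t}^{\boldsymbol{\nu}} : \nu_i \in \Z_{<0}\ \text{for}\ i \in S,\ \nu_j \in \N\ \text{for}\ j \notin S\}$. I then define
$$
\phi:P^{\psi_S}\longrightarrow D_{(S)}^+ P,\qquad \phi(\boldsymbol{t}^{\boldsymbol{\mu}})=\Big(\prod_{i \in S}\mu_i!\Big)\prod_{i \in S}t_i^{-\mu_i-1}\prod_{j \notin S}t_j^{\mu_j},
$$
extended linearly. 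This sends the monomial basis of $P^{\psi_S}$ bijectively onto a nonzero rescaling of the above basis of $D_{(S)}^+ P$, so it is a linear isomorphism.

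\textbf{Equivariance and main obstacle.} It remains to verify that $\phi$ intertwines the $\mathcal{D}(n)$-actions, which is a direct check on generators. For $j \notin S$ the $\psi_S$-twisted action of $t_j,\partial_j$ coincides with the standard action on $P$ and the scalar $c_{\boldsymbol{\mu}}:=\prod_{i \in S}\mu_i!$ is $\mu_j$-independent, so the identities $\phi(t_j m)=t_j\phi(m)$ and $\phi(\partial_j m)=\partial_j \phi(m)$ are immediate. For $i \in S$, the $\psi_S$-twisted action on $P$ reads $t_i\cdot \boldsymbol{t}^{\boldsymbol{\mu}}=\mu_i\boldsymbol{t}^{\boldsymbol{\mu}-e_i}$ and $\partial_i\cdot\boldsymbol{t}^{\boldsymbol{\mu}}=-\boldsymbol{t}^{\boldsymbol{\mu}+e_i}$; comparing with the action of $t_i,\partial_i$ on $\phi(\boldsymbol{t}^{\boldsymbol{\mu}})$ in $D_{(S)}^+ P$ reduces to the two recursions $c_{\boldsymbol{\mu}}=\mu_i c_{\boldsymbol{\mu}-e_i}$ and $c_{\boldsymbol{\mu}+e_i}=(\mu_i+1)c_{\boldsymbol{\mu}}$, both tautologies for factorials. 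The one point worth flagging is the boundary $\mu_i=0$ in the $t_i$-equation: the left-hand side $\phi(\mu_i \boldsymbol{t}^{\boldsymbol{\mu}-e_i})$ vanishes, and on the right $t_i\phi(\boldsymbol{t}^{\boldsymbol{\mu}})$ produces a monomial with $\nu_i=0$ which is precisely killed in the quotient $D_{(S)}^+ P$. There is no substantive obstacle — only the bookkeeping of aligning the $\psi_S$, the $\theta_g$, and the quotient by $P$ must be done carefully.
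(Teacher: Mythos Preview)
Your argument is correct and follows essentially the same route as the paper: reduce the second isomorphism to the observation that $\theta_g$ fixes each $t_i$ (the paper calls this ``straightforward''), then for the first isomorphism reduce to $g=0$ and write down an explicit basis-to-basis map from $P^{\psi_S}$ to $D_{(S)}^+P$ and check it is $\mathcal D(n)$-equivariant. The paper's map $t_S^{\mathbf k}p\mapsto \partial_S^{\mathbf k}(t_S^{-1})p$ differs from yours only by the sign $(-1)^{|\mathbf k|}$ on each monomial; both send basis elements to nonzero scalar multiples of basis elements, and your more explicit verification of the boundary case $\mu_i=0$ is exactly the point the paper leaves to the reader.
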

\begin{proof}
The second isomorphism is straightforward. For the first one we may assume for simplicity that $g=0$. Let $\widehat{S} =  \llbracket n \rrbracket \setminus S$. Using the multi-index notation, the space $D_{(S)}^+ P$ has basis $t_S^{\bf m}t_{\widehat{S}}^{\bf \ell}$, where ${\bf m} \in \left( \mathbb Z_{<0}\right)^{|S|}$ and ${\bf \ell} \in \left( \mathbb Z_{\geq 0}\right)^{|\widehat{S}|}$. For ${\bf k} = (k_1,...,k_{|S|})$ and $p \in {\mathbb C}[{\bf t}]$ with $\partial_ip = 0$ for all $i \in S$, consider the map $t_S^{\bf k}p \mapsto \partial_S^{\bf k}(t_S^{\bf -1})p$. It is not difficult to check that this map extends to a homomorphism $P^{\psi_S}  \to D_{(S)}^+ (P) $. This is an isomorphism since it maps a basis element to a nonzero scalar multiple of the corresponding basis element. 
\end{proof}
\subsection{The case of one-dimensional $V$} \label{subsec-1-dim}
We now focus on the case when $V$ is one-dimensional representation of weight $a \in {\mathbb C}$. We denote this representation by $V_a$. In other words  $V_a = a \tr$.

\begin{proposition} \label{prop-v-a} Given $g \in \mathcal O_0$, $a \in \mathbb C$, and $S  \subset \llbracket n \rrbracket $, we have the following.
\begin{itemize}
\item[(i)] If $(n+1)(a-1) \notin \mathbb Z$, then $T(g,V_a,S)$ is simple. 
\item[(ii)] If $(n+1)(a-1) \in \mathbb Z_{ \leq -n-1}$, then $T(g,V_a,S)$ is simple if and only if  $S = \emptyset$.
\item[(iii)]  If $(n+1)(a-1) \in \{-n,-n+1,...,-1\}$, then $T(g,V_a,S)$ is simple if and only if  $S = \emptyset$ or $S =   \llbracket n \rrbracket$.
\item[(iv)] If $(n+1)(a-1) \in \mathbb Z_{ \geq 0}$, then $T(g,V_a,S)$ is simple if and only if  $S = \llbracket n \rrbracket$.
\end{itemize} 
\end{proposition}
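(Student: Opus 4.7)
The plan is to reduce the general claim to the weight-module case $g = 0$ via a filtration argument whose associated graded module recovers $T(0, V_a, S)$, and then analyze $T(0, V_a, S)$ by direct inspection of the weight basis.

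\textit{Stage 1.} For $g = 0$, Lemma~\ref{lem-loc-exp} identifies $T(0, V_a, S)$ with $T(D_{(S)}^+\mathcal{O}, V_a)$, which has the weight basis $B_S := \{t^\alpha : \alpha_i < 0 \text{ for } i \in S,\ \alpha_j \geq 0 \text{ for } j \notin S\}$. Direct computation with $\omega_V$ gives, on this basis,
\begin{align*}
e_{n+1,j}\cdot t^\alpha &= -t^{\alpha+e_j}, \\
e_{i,n+1}\cdot t^\alpha &= \alpha_i(|\alpha| - c - 1)\, t^{\alpha-e_i},
\end{align*}
where $c := (n+1)(a-1)$ and each expression is understood to be zero whenever the result falls outside $B_S$. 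The subspace $N_{c+1} := \Span\{t^\alpha \in B_S : |\alpha| \geq c+1\}$ is then a submodule, since $\alpha_i(|\alpha|-c-1)$ vanishes at the boundary $|\alpha| = c+1$. The range of $|\alpha|$ on $B_S$ equals $\mathbb{Z}_{\geq 0}$ when $S = \emptyset$, $\mathbb{Z}_{\leq -n}$ when $S = \llbracket n \rrbracket$, and all of $\mathbb{Z}$ in the intermediate case, so placing $c+1$ in each range recovers the non-simplicity claims in (ii), (iii) for intermediate $S$, and (iv). When $c$ lies outside the relevant bad set, I would establish simplicity by showing that $t^{-\mathbf{1}_S}$ is a cyclic generator: from any nonzero basis vector, successive applications of $e_{n+1,i}$ for $i \in S$ raise each $\alpha_i$ toward $-1$, while applications of $e_{j,n+1}$ for $j \notin S$ lower each $\alpha_j$ toward $0$; the nonexistence of $|\alpha| = c+1$ along these trajectories rules out vanishing of the coefficient.

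\textit{Stage 2.} For general $g \in \mathcal{O}_0$, I equip $T(g, V_a, S)$ with the decreasing filtration $F_k := \Span\{e^g t^\alpha : \alpha \in B_S,\ |\alpha| \geq k\}$. A careful term-by-term expansion of $\omega_V \circ (\theta_g \otimes 1)$ shows that, in every generator, any term involving a factor $\partial_\bullet g$ contributes a nonnegative shift to $|\alpha|$. Consequently, only the untwisted terms $(n+1)(1-a)\partial_i$ and $\sum_j t_j \partial_j \partial_i$ inside $e_{i,n+1}$ produce the single $-1$ shift, with coefficient exactly $\alpha_i(|\alpha|-c-1)$; meanwhile, $e_{n+1,j}\colon F_k \to F_{k+1}$, $e_{i,n+1}\colon F_k \to F_{k-1}$, and the remaining generators preserve $F_k$. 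It follows that the induced action on $\mathrm{gr}\,T(g, V_a, S) := \bigoplus_k F_k/F_{k+1}$ matches the $\omega_V$-action exactly, yielding a canonical isomorphism $\mathrm{gr}\,T(g, V_a, S) \cong T(0, V_a, S)$ of $\mathfrak{sl}(n+1)$-modules. Because the filtration is exhaustive and separated (every element of $T(g, V_a, S)$ is a finite linear combination over $B_S$), a standard graded-module argument furnishes both implications at once: simplicity of $T(0, V_a, S)$ forces that of $T(g, V_a, S)$, and if $N_{c+1}$ is proper in the former then $F_{c+1}$ is already proper in the latter, since the only $|\alpha|$-decreasing contribution of $e_{i,n+1}$ still vanishes on the boundary $|\alpha| = c+1$.

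\textit{Main obstacle.} The crux is the uniform bookkeeping in Stage~2: for every generator of $\mathfrak{sl}(n+1)$ and for every $g \in \mathcal{O}_0$, I must verify that the $\theta_g$-corrections shift $|\alpha|$ by nonnegative amounts, so that the single $-1$ shift of $e_{i,n+1}$ survives in the associated graded with exactly the $g = 0$ coefficient. A secondary subtlety in Stage~1 is ensuring cyclicity for intermediate $S$ and non-integer $c$ in both raising and lowering directions, which reduces to a combinatorial check on $B_S$.
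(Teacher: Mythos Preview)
Stage 1 and the non-simplicity half of Stage 2 are correct and coincide with the paper's argument: both exhibit the submodule $\Span\{e^g t^{\alpha}: |\alpha|\ge c+1\}$ by checking that the sole degree-lowering coefficient $\alpha_i(|\alpha|-c-1)$ vanishes on the boundary.

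The genuine gap is the simplicity direction of Stage 2. The claim that simplicity of $\mathrm{gr}\,M$ forces simplicity of $M$ is \emph{not} a standard fact for separated exhaustive $\mathbb{Z}$-filtrations, and it is false in general. From a nonzero proper $N\subset M$ one does obtain $\mathrm{gr}\,N=\mathrm{gr}\,M$, hence $F_k+N=M$ for all $k$; but this only says $N$ is dense in the filtration topology, not that $N=M$. A model of the failure: take $M=\mathbb{C}[t,t^{-1}]$ with $F_k=t^k\mathbb{C}[t]$ and let $A$ be generated by $t$, $t^{-1}$, and $t(1+t)\partial_t$. Then $N=(1+t)M$ is a proper $A$-submodule, yet $\mathrm{gr}\,M$ is simple because the symbols $t,\,t^{-1},\,t\partial_t$ act irreducibly. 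The downward induction that would rescue your argument needs $F_K=0$ for large $K$, which happens only when $S=\llbracket n\rrbracket$; for $S=\emptyset$ and intermediate $S$ there is no base case.

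The paper avoids the graded transfer entirely by proving that every submodule $N$ is \emph{monomial-spanned}. Since $e_{n+1,j}$ acts as $-t_j$, multiplication by any polynomial lies in the image of $U(\mathfrak{s})$; hence from
\[
h_k(e^g f)= -t_k(\partial_k g)\,e^g f - e^g(t_k\partial_k f) + (a-1)e^g f
\]
one extracts $e^g(t_k\partial_k f)\in N$. Iterating, the commuting Euler operators separate the finitely many monomials in $f$ by their distinct eigenvalue tuples $(\alpha_1,\dots,\alpha_n)$, so each $e^g t^{\alpha}\in N$. With this homogeneity in hand, your Stage~1 cyclicity argument (now run on the vectors $e^g t^{\alpha}$, using the same leading coefficients) finishes the proof for all $S$ at once. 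So the filtration bookkeeping you flagged as the main obstacle is actually fine; the missing ingredient is precisely the homogeneity step.
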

\begin{proof} By Lemma \ref{lem-loc-exp}, $T(g,V_a,S) \simeq T(e^g D_{(S)}^+ (\mathbb C [{\bf t}]), V_a)$. The module $T(e^g D_{(S)}^+ (\mathbb C [{\bf t}]), V_a)$ has a basis $e^g t^{\bf m}$, where $m_i \in \mathbb Z_{<0} $ for $i \in S$ and  $m_i \in \mathbb Z_{\geq 0} $ for $i \notin S$. 

We next prove the ``only if'' part for all (i)-(iv).  First assume  that $(n+1)(a-1) \in \mathbb Z$. The coefficient $b_{\bf m}$ of $e^g {\bf t}^{\bf m}t_i^{-1}$ in the expansion of  $ e_{i,n+1} (e^g {\bf t}^{\bf m}) $ is 
 \begin{equation}\label{eqn-coeff}
 b_{\bf m} = m_i \left(\sum_{j=1}^n m_j -1 - (n+1) (a-1)\right).
 \end{equation}
From here we easily check that 
$$
T'= \Span \{ e^g {\bf t}^{\bf m} \; | \; \sum_{j=1}^n m_j \geq  (n+1)(a-1) +1\}
$$
is a submodule of $T(g,V_a,S)$. We have that $T'=T(g,V_a,S)$ if and only if $(n+1)(a-1) \in {\mathbb Z}_{\leq -1}$ and $S = \emptyset$; and $T'=0$  if and only if $(n+1)(a-1) \in {\mathbb Z}_{\geq  -n}$ and $S = \llbracket n \rrbracket$. This completes the proof of the ``only if'' statements. 

It remains to show the ``if'' parts. Assume that the conditions for $S$ in (i)-(iv) are satisfied and let  $M$ be a nontrivial submodule of $T(g,V_a,S)$.  We first show \emph{homogeneity of $M$}, namely, if $ e^g  f \in M$,  $ f = \sum_{\bf m}a_{\bf m}{\bf t}^{\bf m}$,   then $e^g{\bf t}^{\bf m} \in M$, whenever $a_{\bf m} \neq 0$. Indeed, for $k=1,....,n$ we have that
$$
h_k e^g  f =  -t_k\partial_kg e^g  f -  e^g t_k\partial_kf + (1-a) e^g  f.
$$
Thus $e^g t_k\partial_kf \in M$ which easily implies $e^g{\bf t}^{\bf m} \in M$ if  $a_{\bf m} \neq 0$.  Using the homogeneity of $M$ and applying multiple actions of $e_{i,n+1}$ and  $e_{n+1,j}$  if necessary,  we obtain that  $e^g{\bf t}_S^{\bf -1} \in M$. Then using again  homogeneity of $M$ and  multiple actions of $e_{i,n+1}$ and  $e_{n+1,j}$, we see that  $e^g{\bf t}^{\bf m} \in M$ for all ${\bf m}$ such that $m_i \in \mathbb Z_{<0} $ for $i \in S$ and  $m_i \in \mathbb Z_{\geq 0} $ for $i \notin S$. Hence, $M = T(g,V_a,S)$.\end{proof}
\begin{remark}
The theorem above provides a correction to Theorem 32(ii) in \cite{Nil1}, see Corollary \ref{cor-nilsson} for the transition between the modules of Nilsson and $T(g,V_a,S)$.
\end{remark}

\subsection{The case of $V = \bigwedge^k{\mathbb C}^n$}
Here we focus on the case when $V$ is an exterior power of the natural module of $\mathfrak{gl} (n)$. We will use the notation $V = \bigwedge^k{\mathbb C}^n$ in this case. The next result gives simplicity criterion for $T(g, \bigwedge^k{\mathbb C}^n ,S)$. The simplicity of  $T(g, \bigwedge^k{\mathbb C}^n ,S)$ as modules over the Lie algebra $W_n$ was proven in \cite{LLZ}.

\begin{proposition}  \label{prop-exterior}Given $g \in \mathcal O_0$, $k \in \{0,1,...,n \}$, and $S  \subset \llbracket n \rrbracket $,  we have the following.
\begin{itemize}
\item[(i)] If  $0<k<n$,  then $T(g, \bigwedge^k{\mathbb C}^n ,S)$ is not simple.
\item[(ii)]  If $k=0$, then $T(g, \bigwedge^k{\mathbb C}^n ,S)$ is  simple if and only if $S =\emptyset$.
\item[(iii)] If $k=  n $, then $T(g, \bigwedge^k{\mathbb C}^n ,S)$ is  simple if and only if $S = \llbracket n \rrbracket$.
\end{itemize} 
\end{proposition}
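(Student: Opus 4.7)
The plan splits according to the value of $k$. For the extreme cases $k \in \{0, n\}$, I reduce to Proposition \ref{prop-v-a}: as $\mathfrak{gl}(n)$-modules one has $\bigwedge^0 \mathbb{C}^n \simeq V_0$ (trivial module; $(n+1)(0-1) = -(n+1)$ lies in case (ii) of Proposition \ref{prop-v-a}, yielding simplicity iff $S = \emptyset$) and $\bigwedge^n \mathbb{C}^n \simeq V_1$ (each $E_{ii}$ acts by $1$ via the derivation rule on $v_1\wedge\cdots\wedge v_n$; $(n+1)(1-1) = 0$ lies in case (iv), yielding simplicity iff $S = \llbracket n \rrbracket$). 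This settles (ii) and (iii).

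For (i), fix $0 < k < n$, write $P = (\mathcal{O}^{\psi_S})^{\exp_g}$ for the $\mathcal{D}(n)$-module underlying $T(g, \bigwedge^k \mathbb{C}^n, S)$, let $v_1,\ldots,v_n$ denote the standard basis of $\mathbb{C}^n$, and let $\epsilon_l$ (resp.\ $\iota_l$) denote exterior multiplication by $v_l$ (resp.\ contraction against $v_l$) on $\bigwedge^\bullet \mathbb{C}^n$. I introduce the operator
\[
s \;=\; \sum_{l=1}^n t_l \otimes \epsilon_l \;\in\; \mathcal{D}(n) \otimes \Hom\bigl(\textstyle\bigwedge^{k-1}\mathbb{C}^n,\, \bigwedge^k \mathbb{C}^n\bigr),
\]
which is left wedge multiplication by the $1$-form $\theta = \sum_l t_l \otimes v_l$ and satisfies $s^2 = 0$ since $\theta \wedge \theta = 0$. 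The goal is to verify that $s$ commutes with the image of the universal homomorphism $\omega \colon U(\mathfrak{sl}(n+1)) \to \mathcal{D}(n) \otimes U(\mathfrak{gl}(n))$ from \S \ref{subsec-empty}, so that $s$ intertwines the $\mathfrak{sl}(n+1)$-actions on $T(P, \bigwedge^{k-1}\mathbb{C}^n)$ and $T(P, \bigwedge^k \mathbb{C}^n)$, and then to identify $\im s$ as a nontrivial proper submodule.

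The commutators $[s, \omega(e_{n+1,j})]$, $[s, \omega(e_{i,j})]$ and $[s, \omega(h_k)]$ vanish by direct computation using $[\partial_j, t_l] = \delta_{jl}$ together with $[E_{ij}, \epsilon_l] = \delta_{jl}\epsilon_i$. The hard case is $[s, \omega(e_{i,n+1})]$: expanding the four summands of $\omega(e_{i,n+1})$, every contribution involving $\partial_i \cdot s$ or $E \otimes \epsilon_i$ (where $E = \sum_j t_j\partial_j$) cancels in pairs, and the assertion collapses to the purely endomorphism-level identity $\sum_j \epsilon_j E_{ij} + \epsilon_i H = 0$ on $\bigwedge^\bullet \mathbb{C}^n$, with $H = \sum_j E_{jj}$. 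Via the Clifford relations $E_{ij} = \epsilon_i \iota_j$ and $\epsilon_i \epsilon_j = -\epsilon_j \epsilon_i$, this reduces to $\sum_j \epsilon_j \epsilon_i \iota_j = -\epsilon_i \sum_j \epsilon_j \iota_j = -\epsilon_i H$. This single Clifford computation is the main obstacle of the proof.

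With equivariance in hand, $\im s$ is automatically an $\mathfrak{sl}(n+1)$-submodule of $T(P, \bigwedge^k \mathbb{C}^n) \simeq T(g, \bigwedge^k \mathbb{C}^n, S)$. It is nonzero: since $k \geq 1$, pick any $I \subset \llbracket n \rrbracket$ with $|I| = k - 1$ and some $l \notin I$, and then pick $p \in P$ with $t_l \cdot p \neq 0$ (possible for every $S$ and every $g \in \mathcal{O}_0$, by selecting $p$ with a sufficiently negative $t_l$-exponent when $l \in S$, or any nonzero $p$ when $l \notin S$); the $v_l \wedge v_I$-coefficient of $s(p \otimes v_I)$ is then $t_l \cdot p \neq 0$. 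It is proper: $\im s \subseteq \ker\bigl(s \colon T(P, \bigwedge^k \mathbb{C}^n) \to T(P, \bigwedge^{k+1} \mathbb{C}^n)\bigr)$ because $s^2 = 0$, and the analogous choice with $|I| = k$ (permitted by $k < n$) produces an element not annihilated by $s$, so the kernel is proper. Therefore $\im s$ is a nontrivial proper submodule, and $T(g, \bigwedge^k \mathbb{C}^n, S)$ is not simple, completing the proof of (i).
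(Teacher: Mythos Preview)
Your proof is correct and follows essentially the same approach as the paper: parts (ii) and (iii) are reduced to Proposition~\ref{prop-v-a} via $\bigwedge^0\mathbb{C}^n\simeq V_0$ and $\bigwedge^n\mathbb{C}^n\simeq V_1$, and part (i) exhibits the image of the differential $s=d_P$, $f\otimes v\mapsto\sum_l(t_lf)\otimes(e_l\wedge v)$, as a nontrivial proper submodule. The only difference is that the paper outsources the $\mathfrak{sl}(n+1)$-equivariance of $d_P$ to \cite{LLZ}, whereas you carry out the commutator computation directly; your Clifford reduction $\sum_j\epsilon_jE_{ij}+\epsilon_iH=0$ is exactly the content of that verification, and your nontriviality/properness arguments (which the paper leaves implicit) are fine, with the remark about ``sufficiently negative $t_l$-exponent'' being naturally interpreted in the localization model $e^gD_{(S)}^+(\mathbb{C}[\mathbf{t}])$ of Lemma~\ref{lem-loc-exp}.
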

\begin{proof} The cases $k=0$ and $k=n$ correspond to the cases $a=0$ and $a=1$ in \S \ref{subsec-1-dim}. This proves parts (ii) and (iii). Part (i) follows from Lemma 3.2 in \cite{LLZ}, but for reader's convenience we outline the important parts in the proof. The crucial part is that for any ${\mathcal D}(n)$-module $P$, there is a differential map:
\begin{equation} \label{diff-map}
d_P : T(P, \bigwedge {\mathbb C}^n) \to T(P, \bigwedge {\mathbb C}^n),
\end{equation}
$d_P(f\otimes v) = \sum_{i=1}^n (t_i f) \otimes (e_i \wedge v)$, where $(e_1,...,e_n)$ is the standard basis of ${\mathbb C}^n$. This map has the property that $d_P^2=0$ and that it is $\mathfrak{sl}(n+1)$-homomorphism (in fact, it is $W_n$-homomorphism, too). This leads to the de Rham complex 
\begin{equation} \label{de-rham}
0 \xrightarrow[]{d_P}T(P, \bigwedge\nolimits^0 {\mathbb C}^n) \xrightarrow[]{d_P}  T(P, \bigwedge\nolimits^1 {\mathbb C}^n) \xrightarrow[]{d_P} \cdots \xrightarrow[]{d_P} T(P, \bigwedge\nolimits^n {\mathbb C}^n) \xrightarrow[]{d_P}  0.
\end{equation}

 Thus $d_P\left[ T(P, \bigwedge^{k-1} {\mathbb C}^n) \right]$ is a nontrivial proper submodule of $T(P, \bigwedge^{k}{\mathbb C}^n)$ for $k=1,...,n-1$. Letting $P = e^g\left(\mathbb C [{\bf t}]\right)^{\psi_S}$ leads to the proof of the nonsimplicity of $T(g, \bigwedge^k{\mathbb C}^n ,S)$  for $k=1,...,n-1$.  \end{proof}

\begin{remark}
Consider the case $S = \llbracket n \rrbracket$. Then the map $d_P$ defined in \eqref{diff-map} is nothing but the standard de Rham differential. In particular, if $g=0$, we have that $T(g, \bigwedge^k {\mathbb C}^n, \llbracket n \rrbracket)$ is isomorphic to the module $\Omega_{\mathbb C^n}^k$ of $k$-forms on ${\mathbb C^n}$ and $d_P=d$ is the standard  differential operator on the de Rham complex on  ${\mathbb C^n}$. In the case of arbitrary $g$, $d_P = d_g$ is the Witten deformation of the standard de Rham differential by $g$ defined in \cite{W}. Namely,  $d_P(\omega) = d(\omega)+dg\wedge\omega$.
\end{remark}

\begin{remark}
The structure of the (non-simple) modules $T(g, \bigwedge^k{\mathbb C}^n ,S)$, $0<k<n$, is a bit complicated.  The modules may have length 2, 4, or 5. A Jordan-H\"older decomposition of  $T(g, \bigwedge^k{\mathbb C}^n ,S)$, $0<k<n$, will be provided in \cite{Ng}.
\end{remark}

\section{Central characters and translation functors} \label{sec-tran-fun}
Recall the homomorphism $\omega : U(\mathfrak{sl} (n+1)) \to {\mathcal D}(n) \otimes U(\mathfrak{gl} (n))$ defined in \S \ref{subsec-empty}. In this section we identify $\mathfrak{gl} (n)$ with the subalgebra $\mathfrak{g}_n$ of $\mathfrak{sl} (n+1)$ spanned by $e_{ij}$, $1\leq i \neq j \leq n$, $h_k$, $k=1,...,n$. In particular, $\mathfrak h$ is the fixed Cartan subalgebra of $\mathfrak{sl} (n+1)$ and  $\mathfrak{gl} (n)$. Recall that we identify $U(\mathfrak h)$ with ${\mathbb C}[{\bf h}]$. We set $\mathfrak b_{\g} = \mathfrak b \cap \mathfrak g$.

\subsection{Central characters of the exponential tensor modules}
As usual, for a reductive Lie algebra $\mathfrak a$, we say that an $\mathfrak a$-module has central character $\chi : Z(\mathfrak a) \to \mathbb C $ if every $z \in  Z(\mathfrak a) $ acts on $M$ as $\chi(z) {\rm Id}$. For $\lambda \in \h^*$, we set $\chi_{\lambda}$ to be the central character of the simple $\mathfrak b$-highest weight $\mathfrak{sl} (n+1)$-module $L_{\mathfrak b} (\lambda)$ with highest weight $\lambda$. The character of the corresponding simple $\mathfrak b_{\g}$-highest weight $\mathfrak{g}_n$-module will be denoted by $\chi'_{\lambda}$.

We identify $\h^*$ with ${\mathbb C}^n$ and set $\rho_{\mathfrak s} = \left( n,n-1,...,1 \right)$,  $\rho_{\g} = \left(n-1, n-2,...., 0\right)$, and ${\bf 1} = \left(1,1,...,1 \right)$. In particular, $\rho_{\mathfrak s} = \rho_{\g} + {\bf 1}$. The elements in $\lambda \in \h^*$ naturally extend to algebra homomorphisms $\lambda : \mathbb C [{\bf h}] \to \C$. Then there exist isomorphisms $\xi_{\mathfrak s} : Z(\mathfrak{sl} (n+1)) \to \C [{\bf h}]$ and $\xi_{\g} : Z(\mathfrak{g}_n) \to \C [{\bf h}]$ such that $\chi_{\lambda} (z) = (\lambda + \rho_{\mathfrak s}) (\xi_{\mathfrak s} (z))$ and $\chi'_{\lambda} (z') = (\lambda + \rho_{\g}) (\xi_{\g} (z'))$, respectively,  for all $\lambda \in \h^*$, $z \in Z(\mathfrak{sl} (n+1))$, and $z' \in Z(\mathfrak{g}_n)$. The maps  $\xi_{\mathfrak s}$ and $\xi_{\g}$ are certainly the restrictions of the Harish-Chandra homomorphisms $U(\mathfrak{sl}(n+1))^{\h} \to U(\h)$ and $U(\mathfrak{g}_n)^{\h} \to U(\h)$ to the corresponding centers, where $U(\mathfrak a)^{\h}$ stands for the centralizer of $\h$ in $U(\mathfrak a)$. Let $\xi : Z(\mathfrak{sl} (n+1)) \to Z(\mathfrak{g}_n)$ be the composition $\xi = \xi_{\g}^{-1} \xi_{\mathfrak s}$. 
\begin{proposition}
With the notation as above, $\omega (z) = 1 \otimes ( \xi (z) ) $ for all $z \in Z(\mathfrak{sl} (n+1)) $.
\end{proposition}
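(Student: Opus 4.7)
The plan is to evaluate both $\omega(z)$ and $1 \otimes \xi(z)$ on the tensor modules $T(\mathcal{O}, L(\mu))$, observe they act as equal central-character scalars on each, and then pass from equality of actions to equality of elements in $\mathcal{D}(n) \otimes U(\mathfrak{gl}(n))$ via a faithfulness argument.

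For each dominant integral $\mu \in \mathfrak{h}^*$, let $L(\mu) = L_{\mathfrak{b}_{\mathfrak{g}}}(\mu)$ be the finite-dimensional simple $\mathfrak{gl}(n)$-module with highest weight $\mu$ and let $v_\mu$ be a highest weight vector. Using the formulas in \S \ref{subsec-empty}, the vector $1 \otimes v_\mu \in T(\mathcal{O}, L(\mu))$ is killed by $\omega(e_{i,j})$ for $1 \leq i < j \leq n$ (since $E_{ij} v_\mu = 0$ and $t_j\partial_i(1) = 0$), is killed by $\omega(e_{i, n+1})$ (since every summand in the formula for $\omega(e_{i,n+1})$ applies some $\partial$ to $1$), and has $\omega(h_k)$-eigenvalue $\mu_k - 1$. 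Thus $1 \otimes v_\mu$ is a $\mathfrak{b}$-highest weight vector of $\mathfrak{sl}(n+1)$-weight $\mu - \mathbf{1}$. The identities $\omega(e_{j,i})(1 \otimes u) = 1 \otimes E_{ji} u$ for $j > i$ and arbitrary $u \in L(\mu)$, together with $\omega(e_{n+1, k})$ acting as multiplication by $-t_k$, show that $T(\mathcal{O}, L(\mu))$ is $U(\mathfrak{sl}(n+1))$-cyclic on $1 \otimes v_\mu$, hence a highest weight module with central character $\chi_{\mu - \mathbf{1}}$. Consequently $\omega(z)$ acts on $T(\mathcal{O}, L(\mu))$ as the scalar
\[ \chi_{\mu - \mathbf{1}}(z) \; = \; (\mu - \mathbf{1} + \rho_{\mathfrak{s}})(\xi_{\mathfrak{s}}(z)) \; = \; (\mu + \rho_{\mathfrak{g}})(\xi_{\mathfrak{s}}(z)), \]
using $\rho_{\mathfrak{s}} = \rho_{\mathfrak{g}} + \mathbf{1}$. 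On the other hand, since $\xi(z) \in Z(\mathfrak{gl}(n))$ acts on $L(\mu)$ as $\chi'_\mu(\xi(z)) = (\mu + \rho_{\mathfrak{g}})(\xi_{\mathfrak{g}}(\xi(z))) = (\mu + \rho_{\mathfrak{g}})(\xi_{\mathfrak{s}}(z))$ by $\xi = \xi_{\mathfrak{g}}^{-1} \xi_{\mathfrak{s}}$, the operator $1 \otimes \xi(z)$ acts on $T(\mathcal{O}, L(\mu))$ as the same scalar. Therefore $\Delta := \omega(z) - 1 \otimes \xi(z)$ annihilates $T(\mathcal{O}, L(\mu))$ for every dominant integral $\mu$.

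To deduce $\Delta = 0$ in $\mathcal{D}(n) \otimes U(\mathfrak{gl}(n))$, expand $\Delta = \sum_{\alpha, \beta \in \mathbb{Z}_{\geq 0}^n} t^\alpha \partial^\beta \otimes u_{\alpha, \beta}$ as a finite sum with $u_{\alpha, \beta} \in U(\mathfrak{gl}(n))$. For any $p \in \mathcal{O}$ and $w \in L(\mu)$, the identity $\Delta(p \otimes w) = \sum_{\alpha,\beta}(t^\alpha\partial^\beta p) \otimes (u_{\alpha,\beta} w) = 0$, combined with the linear independence of the operators $\{t^\alpha \partial^\beta\}$ in $\mathcal{D}(n) \subset \mathrm{End}(\mathcal{O})$ applied coordinate-wise in a basis of $L(\mu)$, forces $u_{\alpha, \beta} w = 0$ for all $\alpha, \beta$, all $w \in L(\mu)$, and all dominant integral $\mu$. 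Since finite-dimensional simple $\mathfrak{gl}(n)$-modules separate $U(\mathfrak{gl}(n))$, i.e.\ $\bigcap_\mu \mathrm{ann}_{U(\mathfrak{gl}(n))}(L(\mu)) = 0$, each $u_{\alpha, \beta}$ vanishes and $\Delta = 0$. The conceptual heart of the argument is the weight shift by $-\mathbf{1}$ combined with $\rho_{\mathfrak{s}} - \rho_{\mathfrak{g}} = \mathbf{1}$, which is precisely what makes the two Harish-Chandra identifications compatible through $\xi$; the main bookkeeping hurdle is checking $\omega(e_{i, n+1})(1 \otimes v_\mu) = 0$ from the lengthy formula for $\omega(e_{i,n+1})$, but this reduces cleanly to the fact that every summand contains a derivative annihilating the constant polynomial $1$.
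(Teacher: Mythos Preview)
Your proof is correct and follows essentially the same approach as the paper's: both arguments show that $1\otimes v_\mu$ is a $\mathfrak{b}$-highest weight vector of weight $\mu-\mathbf{1}$ in $T(\mathcal{O},L(\mu))$, deduce that $\omega(z)$ and $1\otimes\xi(z)$ act on this module by the same scalar via the identity $\rho_{\mathfrak{s}}=\rho_{\mathfrak{g}}+\mathbf{1}$, and then conclude equality in $\mathcal{D}(n)\otimes U(\mathfrak{gl}(n))$ by a faithfulness argument. The only variation is in the faithfulness step: the paper works with arbitrary $\lambda\in\mathbb{C}^n$ and invokes that (for generic $\lambda$) the single module $\mathbb{C}[\mathbf{t}]\otimes L_{\mathfrak{b}_{\mathfrak{g}}}(\lambda)$ is already faithful over $\mathcal{D}(n)\otimes U(\mathfrak{gl}(n))$, whereas you restrict to dominant integral $\mu$ and instead use that the family of all finite-dimensional $L(\mu)$ separates $U(\mathfrak{gl}(n))$, combined with an explicit PBW expansion of $\Delta$. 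Your version is slightly more explicit (you also spell out cyclicity of $T(\mathcal{O},L(\mu))$, which the paper leaves implicit), but conceptually the two proofs are the same.
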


\begin{proof} We will prove that the identity $\omega (z) = 1 \otimes \xi (z)$ holds when both sides are considered as endomorphisms on  $T(\lambda) = T(0,\C [{\bf t}], L_{\mathfrak b_{\g}}(\lambda))$ for $\lambda \in \C^n$. This is sufficient since $T(\lambda) = \C [{\bf t}] \otimes  L_{\mathfrak b_{\g}}(\lambda)$ is a faithful module over ${\mathcal D} (n) \otimes U(\mathfrak{gl} (n))$. Let $v_0$ be a $\mathfrak b_{g}$-highest weight vector of $L_{\mathfrak b_{\g}}(\lambda)$. Using the formulas in \S \ref{subsec-empty}, one easily checks that  $1 \otimes v_0$ is a $\mathfrak b$-highest weight vector of the 
$\mathfrak{sl} (n+1)$-module $T(\lambda)$. Again by these formulas,  the weight of $1 \otimes v_0$ is $\lambda - {\bf 1}$. Therefore, $\omega(z) = \chi_{\lambda - {\bf 1}} (z) {\rm Id}$ on $T(\lambda)$. On the other hand, if $z \in Z(\mathfrak{sl} (n+1))$, then 
\begin{equation} \label{equ-chars}
1 \otimes \xi (z) =  \chi_{\lambda}(\xi(z)) {\rm Id} = (\lambda + \rho_{\g}) (\xi_{\mathfrak s} (z)) = (\lambda - {\bf 1}+ \rho_{\mathfrak s}) (\xi_{\mathfrak s} (z)) = \chi_{\lambda - {\bf 1}} (z) {\rm Id}
\end{equation}
on $T(\lambda)$. This completes the proof.
\end{proof}

\begin{corollary} \label{cor-char}
If $V$ is a $\mathfrak{gl} (n)$-module of central character $\chi_{\lambda}$, and $P$ is a ${\mathcal D}(n)$-module, then $T(P,V)$ has central character $\chi_{\lambda - {\bf 1}}$. 
\end{corollary}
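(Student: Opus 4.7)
The plan is to derive this corollary as a direct consequence of the preceding proposition, which asserts that $\omega(z) = 1 \otimes \xi(z)$ for every $z \in Z(\mathfrak{sl}(n+1))$. Since the module structure on $T(P,V) = P \otimes V$ is defined through $\omega$, the action of a central element $z \in Z(\mathfrak{sl}(n+1))$ on $T(P,V)$ coincides with the action of $1 \otimes \xi(z) \in \mathcal{D}(n) \otimes U(\mathfrak{gl}(n))$. The first factor acts as the identity, so the whole thing reduces to understanding how $\xi(z) \in Z(\mathfrak{g}_n)$ acts on $V$.

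The second step is to use the hypothesis that $V$ has central character $\chi'_\lambda$ (in the notation of the paper) to conclude that $\xi(z)$ acts on $V$ as the scalar $\chi'_\lambda(\xi(z))\,{\rm Id}$. Unwinding the definition, this scalar equals $(\lambda + \rho_{\g})(\xi_{\g}(\xi(z)))$. By the definition $\xi = \xi_{\g}^{-1}\xi_{\mathfrak s}$, we have $\xi_{\g}(\xi(z)) = \xi_{\mathfrak s}(z)$, so the scalar becomes $(\lambda + \rho_{\g})(\xi_{\mathfrak s}(z))$.

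The third step is the $\rho$-shift bookkeeping that already appeared in equation \eqref{equ-chars} inside the proof of the preceding proposition. Using $\rho_{\mathfrak s} = \rho_{\g} + \mathbf{1}$ we rewrite
\begin{equation*}
(\lambda + \rho_{\g})(\xi_{\mathfrak s}(z)) = \bigl((\lambda - \mathbf{1}) + \rho_{\mathfrak s}\bigr)(\xi_{\mathfrak s}(z)) = \chi_{\lambda - \mathbf{1}}(z).
\end{equation*}
Thus every $z \in Z(\mathfrak{sl}(n+1))$ acts on $T(P,V)$ as $\chi_{\lambda - \mathbf{1}}(z)\,{\rm Id}$, which is precisely the statement that $T(P,V)$ has central character $\chi_{\lambda - \mathbf{1}}$.

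There is essentially no obstacle here: the corollary is a formal consequence of the preceding proposition together with the standard Harish-Chandra shift between $\rho_{\mathfrak s}$ and $\rho_{\g}$. The only care needed is to keep track of whether central characters are computed via $\xi_{\mathfrak s}$ or $\xi_{\g}$, and the definition $\xi = \xi_{\g}^{-1}\xi_{\mathfrak s}$ is tailored exactly so that the action of $\xi(z)$ on a $\mathfrak{g}_n$-module of character $\chi'_\lambda$ recovers $\chi_{\lambda - \mathbf{1}}(z)$.
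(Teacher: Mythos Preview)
Your proof is correct and follows essentially the same approach as the paper: apply the preceding proposition to get $\omega(z) = 1 \otimes \xi(z)$, use the central character hypothesis on $V$, and then invoke the $\rho$-shift identity from \eqref{equ-chars} to convert $\chi'_{\lambda}(\xi(z))$ into $\chi_{\lambda - {\bf 1}}(z)$. Your write-up is more explicit about the intermediate step $\xi_{\g}(\xi(z)) = \xi_{\mathfrak s}(z)$, but the argument is the same.
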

\begin{proof}
For brevity, write $T(V) = T(P,V)$. Then 
$$
\chi_{T (V)} (\omega (z)) {\rm Id} = \omega(z) = 1\otimes \xi (z) = \chi_V(\xi (z))  {\rm Id}  = \chi_{\lambda } (z')   {\rm Id} = \chi_{\lambda - {\bf 1}} (z)   {\rm Id}.
$$
The last identity follows from the identities \eqref{equ-chars} in the proof of the last proposition.  \end{proof}

\begin{corollary}
If $V_1$ and $V_2$ are simple finite-dimensional $\mathfrak{gl} (n)$-modules, $g_1,g_2 \in {\mathcal O}_0$ and $S_1,S_2 \subset \llbracket n \rrbracket $, then $T(g_1,V_1,S_1) \simeq T(g_2,V_2,S_2)$ if and only if $g_1=g_2$, $V_1\simeq V_2$, and $S_1 = S_2$.
\end{corollary}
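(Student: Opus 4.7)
The ``if'' direction is immediate. For the converse, assume $\Phi: T(g_1,V_1,S_1)\xrightarrow{\sim}T(g_2,V_2,S_2)$; the plan is to recover $S$, the dichotomy ``$g=0$ or not'', $V$, and finally $g$, one invariant at a time.

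First, $S$ is recovered from the operators $e_{n+1,j}$. By Proposition \ref{omega-v-s}, $e_{n+1,j}$ acts on $T(g,V,S)$ via $\omega_V$ as multiplication by $-t_j$ on $P=e^g(\mathbb C[\mathbf t])^{\psi_S}$. Using the definition of $\psi_S$ (cf.\ Lemma \ref{lem-loc-exp}), on the underlying vector space $\mathbb C[\mathbf t]$ this operator is ordinary multiplication by $t_j$ (injective on $\mathbb C[\mathbf t]\otimes V$) if $j\notin S$, and ordinary differentiation $\partial_j$ (with nontrivial kernel) if $j\in S$. Thus $j\in S\iff\ker e_{n+1,j}\neq 0$, an intrinsic criterion preserved by $\Phi$, forcing $S_1=S_2=:S$. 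Since every finite-dimensional simple $\mathfrak{gl}(n)$-module is automatically a weight module, Lemma \ref{lem-weight} further gives that $T(g_i,V_i,S_i)$ is a weight module iff $g_i=0$, whence $g_1=0\iff g_2=0$.

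In the weight case $g_1=g_2=0$, I would recover $V$ from the weight support. Using the realization $T(0,V,S)\simeq T(\mathbb C[\mathbf t],V,S)$ with action via $\omega_{V,S}$, the formulas of Proposition \ref{omega-v-s} show that $1\otimes v_0$ (with $v_0$ a $\mathfrak b_{\mathfrak g}$-highest weight vector of $V$ of weight $\lambda$) is a highest weight vector of $T(0,V,S)$ with respect to the Borel subalgebra $\mathfrak b_S$ obtained from $\mathfrak b$ by moving $\epsilon_{n+1}$ past each $\epsilon_j$ with $j\in S$; its $\mathfrak h$-weight equals $\lambda_k-1$ on $h_k$ for $k\notin S$ and $\lambda_k$ on $h_k$ for $k\in S$, which uniquely determines $\lambda$ and hence $V$.

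In the non-weight case $g_1,g_2\neq 0$, Corollary \ref{cor-char} equates the central characters $\chi_{\lambda_1-\mathbf 1}=\chi_{\lambda_2-\mathbf 1}$, narrowing $\lambda_1$ and $\lambda_2$ to a common $S_{n+1}$-orbit; the $\mathfrak{gl}(n)$-dominant representative is then singled out by combining the central character with the translation functors of Section \ref{sec-tran-fun} together with the eigenvalues of elements of $\omega(Z(\mathfrak g_n))$ on the distinguished vector $e^g\otimes v_0\in\bigcap_{j\in S}\ker e_{n+1,j}$, yielding $V_1\simeq V_2$. With $V$ and $S$ fixed, $\Phi$ descends to an isomorphism of $\mathcal D(n)$-modules $(\mathbb C[\mathbf t]^{\psi_S})^{\exp_{g_1}}\simeq(\mathbb C[\mathbf t]^{\psi_S})^{\exp_{g_2}}$; after reducing to $S=\emptyset$ via Fourier, any such isomorphism sends the image of $e^{g_1}$ to $q\cdot e^{g_2}$ for some $q\in\mathbb C[\mathbf t]$ satisfying $\partial_iq=(g_1^{(i)}-g_2^{(i)})q$ for every $i$, whose only polynomial solutions force $g_1=g_2$. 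The main obstacle I expect is this final case, where the $\mathfrak s$-central character alone is insufficient to pin down $V$ and the finer invariants coming from translation functors and $Z(\mathfrak g_n)$-Casimir pullbacks are essential.
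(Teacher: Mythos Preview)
The paper's proof is much shorter and avoids your weight/non-weight case split. It runs: Corollary~\ref{cor-char} gives $V_1\simeq V_2$ directly from the central character; then a $\theta_{-g_2}$-twist converts the isomorphism into $T(g_1-g_2,V_1,S_1)\simeq T(0,V_1,S_2)$; the right side being a weight module, Lemma~\ref{lem-weight} forces $g_1=g_2$; finally $S_1=S_2$ follows since $e_{n+1,j}$ acts locally nilpotently on $T(0,V,S)$ if and only if $j\in S$. Your Step~1 is essentially the paper's last step, and your use of Lemma~\ref{lem-weight} matches. The device you are missing is the $\theta_{-g_2}$-twist, which reduces everything to the weight case in one stroke and makes both your case analysis and your final differential-equation argument unnecessary.

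Your non-weight branch has two genuine gaps. First, the recovery of $V$ is not actually carried out: the vector $e^g\otimes v_0$ is not intrinsically singled out by $\bigcap_{j\in S}\ker e_{n+1,j}$ (this intersection is $\mathbb C[t_j:j\notin S]\otimes V$, far from one-dimensional), and you give no concrete mechanism by which translation functors or ``$\omega(Z(\mathfrak g_n))$-eigenvalues'' on a non-canonical vector select the $\mathfrak{gl}(n)$-dominant weight from its $S_{n+1}$-orbit. Second, the assertion that $\Phi$ ``descends to an isomorphism of $\mathcal D(n)$-modules'' is unjustified: $\omega_V:U(\mathfrak s)\to\mathcal D(n)\otimes\End(V)$ is not surjective, so $\mathfrak s$-equivariance of $\Phi$ does not yield $\mathcal D(n)$-equivariance, and the equation $\partial_i q=(\partial_i g_1-\partial_i g_2)q$ is never reached. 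Your instinct that the $\mathfrak s$-central character alone may not pin down $V$ is legitimate (for $n=1$ the modules $V_a$ and $V_{1-a}$ share $\chi_{a-1}$), but the remedy is the paper's twist-to-weight reduction---after which your own highest-weight argument in the weight case applies---rather than the ad hoc invariants you propose.
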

\begin{proof}
The isomorphism $V_1\simeq V_2$ follows from Corollary \ref{cor-char}. Then after a $\theta_{-g_2}$-twist, we obtain $T(g_1 - g_2,V_1,S_1) \simeq T(0,V_1,S_2)$. From Lemma \ref{lem-weight} we have $g_1=g_2$. To conclude that $S_1=S_2$, we note for example that $e_{n+1,j}$ acts locally nilpotently on $T(0,V,S)$ if and only if $j \in S$.
\end{proof}
We note that an isomorphism theorem for tensor modules over the Lie algebra $W_n$ is established in \cite{LLZ} (Lemma 3.7).

\subsection{Translation  functors and exponential tensor modules}
Recall that we have fixed $\h$ to be the Cartan subalegbra of $\mathfrak s = \mathfrak{sl} (n+1)$ and $\g \simeq \mathfrak{gl} (n)$ and $\mathfrak b_{\mathfrak s}$ and $\mathfrak b_{\g}$ to be the corresponding Borel subalgebras. We will often write the weights of $\mathfrak{s}$ and $\g$ as $n$-tuples.  

For simplicity we set  $L_{\g}(\lambda) =  L_{\mathfrak b_{\g}} (\lambda)$ and  $L_{\mathfrak s} (\lambda) =  L_{\mathfrak b_{\mathfrak s}} (\lambda)$. Let $\Lambda_{\g}^+$ (respectively,  $\Lambda_{\g}^+$) denote the sets of weights in $\h^*$ such that $L_{\g}(\lambda)$ (respectively, $L_{\mathfrak s} (\lambda)$) is finite dimensional. In other words, $(\lambda_1,...,\lambda_n)\in \Lambda_{\g}^+$ (respectively, $(\lambda_1,...,\lambda_n)\in \Lambda_{\mathfrak s}^+$ ) if and only if $\lambda_{i} - \lambda_{i+1} \in \Z_{\geq 0}$ for $i=1,...,n-1$ (respectively,  $(\lambda_1,...,\lambda_n)\in \Lambda_{\g}^+$ and $\lambda_n+\sum_{i=1}^n\lambda_i \in \Z_{\geq 0}$).

In view of the Weyl group action, for $\lambda \in \h^*$, it is convenient to define $\lambda_{n+1} = - \sum_{i=1}^n\lambda_i$   and  $\widetilde{\lambda} = (\lambda_1,...,\lambda_n,\lambda_{n+1})$. Let $s_i$ denote the transposition $(i,i+1)$ and let $w_k = s_n s_{n-1}...s_{n-k+1}$. In other words, we choose $w_k$ to be the minimal $S_n$-coset representative of length $k$. By definition, $w_0=\mbox{Id}$. Let also, $w_k'$ be the transposition $(n-k+1,n)$, i.e.   $w_k' = s_n s_{n-1}...s_{n-k+1}...s_{n-1}s_n$. As usual, for $w \in S_{n+1}$ and $\widetilde{\lambda} \in \C^{n+1}$, we set $w\cdot \widetilde{\lambda} = w ( \widetilde{\lambda} + \rho_{\mathfrak s}) - \rho_{\mathfrak s}$. By definition, we have $w(\lambda) = w(\widetilde{\lambda})$ and $w \cdot \lambda = w \cdot \widetilde{\lambda}$. We let  $\omega_k = \sum_{i=1}^k \varepsilon_i$ be the $k$-th fundamental weight. In particular, $ \omega_n = {\bf 1}$.

We represent  $\Lambda_{\g}^+$ as a disjoint union of three sets, $\Lambda_{\g}^+  =  \mathcal N \sqcup  \mathcal S \sqcup \mathcal R $, where:
\begin{itemize}
\item[(i)] $\lambda \in  \mathcal N$, if $\lambda \in \Lambda_{\g}^+$,  $\lambda_n+\sum_{i=1}^n\lambda_i \notin \Z$,
\item[(ii)] $\lambda \in  \mathcal S$, if $\lambda \in \Lambda_{\g}^+$, $\lambda_n+\sum_{i=1}^n\lambda_i \in \Z$, and $w_k' \cdot {\lambda} ={\lambda}$ for some $k$,
\item[(iii)] $\lambda \in  \mathcal R$, if $\lambda \in \Lambda_{\g}^+$, $\lambda_n+\sum_{i=1}^n\lambda_i \in \Z$, and $w_k' \cdot {\lambda} \neq {\lambda}$ for all $k$.
\end{itemize}
The sets $ \mathcal N$, $ \mathcal S$, and $ \mathcal R$,  are nothing else, but the sets of $\lambda$ such that $\widetilde{\lambda} $ is nonintegral, singular, and regular integral, respectively.  We further decompose $ \mathcal S$ and $ \mathcal R$ as follows:
$$
 \mathcal S = \sqcup_{i=1}^n {\mathcal H}^{i-1,i}, \;  \mathcal R = \sqcup_{i=0}^n {\mathcal H}^{i}, 
$$
where $\lambda \in {\mathcal H}^{i-1,i}$ if $w_i' \cdot {\lambda} = {\lambda}$;  $\lambda \in {\mathcal H}^{0}$ if $\lambda_n+\sum_{i=1}^n\lambda_i \in \Z_{\geq 0}$; and  $\lambda \in {\mathcal H}^{i}$, $i>1$, if ${\lambda} = w_i \cdot {\mu}$ for some $\mu \in  {\mathcal H}^{0}$. In particular, $ \mathcal H^0 = \Lambda_{\mathfrak s}^+$. Also, if $\lambda = 0$, then $w_k\cdot 0 =  \omega_{n-k}- {\bf 1}$.

The following is easy to verify and will be helpful for the simplicity criterion, i.e. Theorem \ref{thm-simple}. 

\begin{lemma} \label{lem-lambda-1} Let $\lambda \in \Lambda_{\g}^+$ and $\lambda_0 = \infty$. Then the following hold.
\begin{itemize}
\item[(i)] $\lambda - {\bf 1} \in  \mathcal N$, if and only if $\lambda_n+\sum_{i=1}^n\lambda_i \notin \Z$;
\item[(ii)] $\lambda - {\bf 1} \in \mathcal {H}^{k,k+1}$ if and only if  $\lambda_{n-k} - (n-k) = - \sum_{i=1}^n\lambda_i$, for $k = 0,...,n-1$;
\item[(iii)] $\lambda  - {\bf 1}  \in \mathcal {H}^{k}$  if and only if $
 \lambda_{n-k} - (n-k) >  - \sum_{i=1}^n\lambda_i  >   \lambda_{n-k+1} - (n-k+1)
$,
 for $k = 0,...,n$.
\end{itemize}
\end{lemma}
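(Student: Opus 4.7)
The proof is a direct computation once we unwind the dot action of $S_{n+1}$ on the extended vector $\widetilde{\mu} + \widetilde{\rho}_{\mathfrak s}$, where $\mu = \lambda - {\bf 1}$ and $\widetilde{\rho}_{\mathfrak s} = (n, n-1, \ldots, 1, 0) \in \C^{n+1}$ is the natural extension of $\rho_{\mathfrak s}$. The plan is to reduce each of (i)--(iii) to an explicit numerical condition on the coordinates of this vector. First I would observe
\[
\widetilde{\mu} + \widetilde{\rho}_{\mathfrak s} = \bigl(\lambda_1 + n - 1,\; \lambda_2 + n - 2,\; \ldots,\; \lambda_{n-1} + 1,\; \lambda_n,\; n - \textstyle\sum_{i=1}^n \lambda_i\bigr),
\]
so that $\mu_n + \sum_i \mu_i = \lambda_n + \sum_i \lambda_i - (n+1)$. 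Part (i) is then immediate: the membership condition $\mu_n + \sum \mu_i \notin \Z$ for $\mathcal N$ is equivalent to $\lambda_n + \sum \lambda_i \notin \Z$.

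For (ii), I would verify by a short conjugation computation (writing $w_{k+1}'$ as $u^{-1} s_{n-k} u$ with $u = s_{n-k+1} \cdots s_n$) that $w_{k+1}'$ is the transposition $(n-k,\, n+1)$ in $S_{n+1}$. Since $\mathcal H^{k,k+1}$ corresponds to $i = k+1$ in the definition, $\mu \in \mathcal H^{k,k+1}$ is equivalent to equality of the $(n-k)$-th and $(n+1)$-st coordinates of $\widetilde{\mu} + \widetilde{\rho}_{\mathfrak s}$; substitution gives $\lambda_{n-k} + k = n - \sum \lambda_i$, which rearranges to the stated identity.

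For (iii), the same type of reasoning (or a direct induction on $k$) shows that $w_k = s_n s_{n-1} \cdots s_{n-k+1}$ acts on positions as the $(k+1)$-cycle $(n-k+1,\, n+1,\, n,\, n-1,\, \ldots,\, n-k+2)$: it extracts the last coordinate and reinserts it at position $n-k+1$, shifting positions $n-k+1, \ldots, n$ one step to the right. Hence $\mu = w_k \cdot \nu$ for some $\nu \in \Lambda_{\mathfrak s}^+$ if and only if the sequence obtained from $\widetilde{\mu} + \widetilde{\rho}_{\mathfrak s}$ by this rearrangement is strictly decreasing. The first $n$ entries already form a strictly decreasing sequence because $\lambda \in \Lambda_{\g}^+$, so the only nontrivial conditions are
\[
(\widetilde{\mu} + \widetilde{\rho}_{\mathfrak s})_{n-k} \; > \; \mu_{n+1} \; > \; (\widetilde{\mu} + \widetilde{\rho}_{\mathfrak s})_{n-k+1},
\]
which upon substitution give the announced double inequality. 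The boundary cases $k = 0$ and $k = n$ are handled by the conventions $\lambda_{n+1} = -\sum \lambda_i$ and $\lambda_0 = \infty$, which make the respective extreme inequality vacuous.

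The real work is confined to the two cycle descriptions of $w_k$ and $w_k'$; beyond that, the lemma is careful index bookkeeping, and I anticipate no serious obstacle.
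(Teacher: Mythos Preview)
Your argument is correct and is exactly the kind of direct verification the paper has in mind; the paper itself gives no proof of this lemma, merely introducing it with ``The following is easy to verify.'' Your computation of $\widetilde{\mu} + \widetilde{\rho}_{\mathfrak s}$ is right, part (i) is immediate, and your identification of $w_{k+1}'$ with the transposition $(n-k,\,n+1)$ (correcting, incidentally, the paper's typo ``$(n-k+1,n)$'') yields (ii) cleanly.

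One point deserves care in (iii). Under the standard left action of $S_{n+1}$ on $\C^{n+1}$, namely $(wv)_j = v_{w^{-1}(j)}$, the permutation $w_k = s_n s_{n-1}\cdots s_{n-k+1}$ sends $(v_1,\dots,v_{n+1})$ to $(v_1,\dots,v_{n-k},\,v_{n-k+2},\dots,v_{n+1},\,v_{n-k+1})$, i.e.\ it cycles the entry in position $n-k+1$ out to the end. Your verbal description (``extract the last coordinate and reinsert it at position $n-k+1$'') is the action of $w_k^{-1}$, not $w_k$. This does not damage the conclusion: what you actually need is that $w_k^{-1}(\widetilde{\mu}+\widetilde{\rho}_{\mathfrak s}) = \widetilde{\nu}+\widetilde{\rho}_{\mathfrak s}$ be strictly decreasing, and that is precisely the rearrangement you describe. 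So the inequalities
\[
(\widetilde{\mu}+\widetilde{\rho}_{\mathfrak s})_{n-k} \;>\; (\widetilde{\mu}+\widetilde{\rho}_{\mathfrak s})_{n+1} \;>\; (\widetilde{\mu}+\widetilde{\rho}_{\mathfrak s})_{n-k+1}
\]
are the right ones, and your substitution yields the lemma's statement. Just be sure, when you write it up, that the direction of the cycle and the side on which you apply it are consistent; otherwise a reader may suspect the argument proves membership in $\mathcal H^{n-k}$ rather than $\mathcal H^{k}$.
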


We now briefly recall  the definition of translation functors for $ {\mathfrak s}$. and  $ V $ be a
finite-dimensional $ {\mathfrak s}$-module, and let  $
\eta,\lambda\in{\mathfrak h}^{*} $ be such that $\lambda
-\eta \in \Supp V$. Let $ {\mathfrak s}^{\mu}$-mod
denote the category of $ {\mathfrak s} $-modules which admit a
generalized central character $ \chi_{\mu} $. The translation
functor $ T_V^{\eta, \lambda}:{\mathfrak s}^{\eta}\mbox{-mod} \to
{\mathfrak s}^{\lambda}\mbox{-mod}$ is defined by $
T_V^{\eta, \lambda} \left(M\right)=\left(M\otimes
V\right)^{\chi_\lambda} $, where $ \left(M\otimes
V\right)^{\chi_\lambda} $ stands for the direct summand of $
M\otimes V $ admitting generalized central character $
\chi_{\lambda} $. Assume in addition that $ \lambda - \eta$ belongs to the
$W$-orbit of the highest weight of $ V $,
the stabilizers of $ \eta+\rho $ and $
\lambda+\rho $ in the Weyl group coincide and $ \eta+\rho $, $
\lambda+\rho $ lie in the same Weyl facet. Then $
T_V^{\eta, \lambda}:{\mathfrak s}^{\eta}\mbox{-mod} \to {\mathfrak
s}^{\lambda}\mbox{-mod}$ defines an equivalence of categories (see
\cite{BG}). We will need the following particular case of the translation functor.

\begin{proposition} \label{prop-translation}
Let $\lambda \in \Lambda_{\g}^+$ and $P$ be a ${\mathcal D}(n)$-module. Then the following hold. 
\begin{itemize}
\item[(i)]  $\lambda  -{\bf 1} \in  \mathcal N$, then there is $a \in \C$ such that $(n+1) (a-1) \notin \Z$ and 
$T_V^{(a-1){\bf 1}, \lambda-{\bf 1}}T(P,V_a) \simeq T(P,L_{\g}(\lambda))$, where $V=L_{\mathfrak s}(\mu)$ with $\mu = \lambda - a {\bf 1}$.
\item[(ii)]  If  $\lambda -{\bf 1} \in \mathcal H^{k-1,k}$ and $a_k = \frac{k}{n+1}$, then $T_V^{(a_k-1){\bf 1}, \lambda-{\bf 1}}T(P,V_{a_k}) \simeq T(P,L_{\g}(\lambda))$, where $V=L_{\mathfrak s}(\mu)$ with $\widetilde{\mu} = (\lambda_1 - a_k, ...,\lambda_k - a_k, \lambda_k - a_k, ..., \lambda_n - a_k) $.

\item[(iii)] If $\lambda -{\bf 1} \in \mathcal H^{k}$, then  $T_V^{\omega_{n-k}-{\bf 1}, \lambda-{\bf 1}}T(P,\Lambda^{n-k} \C^n ) \simeq T(P,L_{\g}(\lambda))$, where $V=L_{\mathfrak s}(\mu)$ is such that $\lambda -{\bf 1} = w_k \cdot \mu$.
\end{itemize}
Moreover, the functors $T_V^{\eta -{\bf 1}, \lambda - {\bf 1}}$ used in (i)-(iii) are equivalences of categories.
\end{proposition}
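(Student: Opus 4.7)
The plan is to prove each of (i)--(iii) by: first verifying the equivalence-of-categories hypothesis, and second identifying the $\chi_{\lambda - {\bf 1}}$-isotypic component of the appropriate tensor product with $T(P, L_\g(\lambda))$.

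For the first step I would apply the criterion recalled before the proposition. In case (i), given $\lambda$ with $\lambda - {\bf 1} \in \mathcal N$, I choose $a$ with $(n+1)a = \lambda_n + \sum_i \lambda_i - m$ for some integer $m \geq 0$; this simultaneously makes $\mu = \lambda - a{\bf 1}$ $\mathfrak s$-dominant and ensures $(n+1)(a-1) \notin \Z$. By Lemma \ref{lem-lambda-1}(i), both $(a-1){\bf 1} + \rho_{\mathfrak s}$ and $(\lambda - {\bf 1}) + \rho_{\mathfrak s}$ are nonintegral with trivial $S_{n+1}$-stabilizer, and their difference $\mu$ is by construction the highest weight of $V = L_{\mathfrak s}(\mu)$. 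In case (ii), Lemma \ref{lem-lambda-1}(ii) places both $(a_k - 1){\bf 1}$ and $\lambda - {\bf 1}$ on the hyperplane fixed by $w_k'$ with identical stabilizer, and $\widetilde\mu$ as given is $\mathfrak s$-dominant. In case (iii), Lemma \ref{lem-lambda-1}(iii) combined with $\lambda - {\bf 1} = w_k \cdot \mu$ places both weights in regular-integral Weyl facets with trivial stabilizer. In all three cases the Bernstein--Gelfand equivalence then applies.

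For the second step I would first use Corollary \ref{cor-char} to record that $T(P, V_a)$, $T(P, \Lambda^{n-k}\C^n)$, and $T(P, L_\g(\lambda))$ have central characters $\chi_{(a-1){\bf 1}}$, $\chi_{\omega_{n-k} - {\bf 1}}$, and $\chi_{\lambda - {\bf 1}}$ respectively. Next I would construct an $\mathfrak s$-surjection from $T(P, W) \otimes V$ (with $W = V_a$ or $\Lambda^{n-k}\C^n$) onto $T(P, L_\g(\lambda))$: the $\g$-module $W \otimes V|_\g$ contains $L_\g(\lambda)$ as a direct summand (in case (i) because $V_a \otimes L_\g(\mu) \simeq L_\g(\lambda)$ and $L_\g(\mu) \hookrightarrow L_{\mathfrak s}(\mu)|_\g$; in case (iii) by the Pieri rule for tensor with an exterior power; case (ii) analogously), yielding a $\g$-projection $\pi : W \otimes V|_\g \to L_\g(\lambda)$; the candidate $\mathfrak s$-map is then $p \otimes w \otimes v \mapsto p \otimes \pi(w \otimes v)$. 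Compatibility with the $\g$-generators and with $e_{n+1,j}$ (which acts only on $P$ by Proposition \ref{omega-v-s}) is automatic, and compatibility with $e_{i, n+1}$ reduces to a direct calculation using the formulas in \S\ref{subsec-empty}.

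Finally, I would show that the kernel of this surjection has no $\chi_{\lambda - {\bf 1}}$-isotypic part: in case (i), the other $\g$-summands of $L_{\mathfrak s}(\mu)|_\g$ contribute central characters $\chi_{\nu + (a-1){\bf 1}}$ for weights $\nu \neq \mu$, and in the nonsingular regime these are all distinct from $\chi_{\lambda - {\bf 1}}$; the singular and regular-integral cases (ii) and (iii) are handled by the analogous dot-orbit analysis. It follows that the $\chi_{\lambda - {\bf 1}}$-isotypic component surjects onto $T(P, L_\g(\lambda))$, and the surjection is an isomorphism by a character count on the $\g$-isotypic level together with the fact that the translation functor is already known to be exact and essentially surjective. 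The main technical obstacle is the $e_{i, n+1}$-equivariance check: the formula in \S\ref{subsec-empty} intertwines the $\mathcal D(n)$- and $\g$-actions, so verifying equivariance requires expanding both sides and matching term by term; the remainder of the argument is bookkeeping on central characters, $\g$-decompositions, and the formal properties of translation functors.
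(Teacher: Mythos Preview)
Your overall strategy matches the paper's, but there is a genuine gap in the second step. You claim that $e_{n+1,j}$-equivariance of the map $p\otimes w\otimes v \mapsto p\otimes \pi(w\otimes v)$ is automatic because $e_{n+1,j}$ ``acts only on $P$''. That is true inside $T(P,W)$, but on the tensor product $T(P,W)\otimes V$ the element $e_{n+1,j}$ also acts on the $\mathfrak s$-module $V$. Concretely,
\[
e_{n+1,j}\cdot(p\otimes w\otimes v) \;=\; (-t_j p)\otimes w\otimes v \;+\; p\otimes w\otimes (e_{n+1,j}v),
\]
and after applying your map the second summand gives $p\otimes \pi(w\otimes e_{n+1,j}v)$, which has no reason to vanish: $\pi$ is only a $\g$-projection and knows nothing about the $\mathfrak s$-action on $V$. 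So neither $e_{n+1,j}$- nor $e_{i,n+1}$-equivariance is free, and the ``direct calculation'' you flag for $e_{i,n+1}$ is in fact the same issue in disguise.

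The paper resolves this cleanly by first establishing the $\mathfrak s$-module isomorphism $T(P,W)\otimes V \simeq T(P,\,W\otimes V|_{\g})$ (together with exactness of $T(P,-)$ on short exact sequences of $\g$-modules), citing Remarks~2.1--2.2 of \cite{GS2} and the $(\mathfrak s,\mathcal O)$-module viewpoint $T(P,W)=P\otimes_{\mathcal O}(\mathcal O\otimes W)$. This identity absorbs the full $\mathfrak s$-action on $V$ into the $\g$-action on $W\otimes V$ and makes your candidate map an $\mathfrak s$-homomorphism for free. After that, the remaining work is exactly what you describe: decompose $W\otimes L_{\mathfrak s}(\mu)|_{\g}$ via Gelfand--Tsetlin branching, locate $L_{\g}(\lambda)$ as a summand, and check that no other summand contributes to the $\chi_{\lambda-{\bf 1}}$-isotypic piece. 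So your outline becomes correct once you insert (or prove) the tensor-compatibility identity in place of the ad hoc equivariance checks.
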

\begin{proof}
We first note that $T(P,W) \otimes V \simeq T(P,W \otimes V)$, and also that an exact sequence of $\g$-modules 
$$
0 \to W_1 \to W \to W_2 \to 0
$$
leads to an exact sequence 
$$
0 \to T(P,W_1) \to  T(W,V) \to  T(P,W_2	) \to  0
$$
of $\mathfrak{sl}(n+1)$-modules.  These two statements are written as Remarks 2.1 and 2.2 in \cite{GS2} in more restrictive setting, but the proofs are essentially the same. The idea is to observe that the modules $T(P,V)$ are $({\mathfrak s}, \mathcal O)$-modules (i.e. ${\mathfrak s}$-modules and  $\mathcal O$-modules with compatible actions of ${\mathfrak s}$ and $\mathcal O$). More precisely, $T(P,V) = P\otimes_{\mathcal O} \widetilde{V}$, where $\widetilde{V} = \mathcal O \otimes V$. Note that  $\widetilde{V}$ can  be treated as the module of sections of the vector bundle $\mathcal V$ induced from $V$ on the affine open subset of the projective space $\mathbb P^n$ consisting of $[t_0,t_1,...,t_n]$ such that $t_i\neq 0$. For details on the geometric interpretation, see, for example,  \S 11 in \cite{M} or  \S 2 in \cite{GS2}.

We next apply the tensor product invariance and the exactness of $T(P,V)$ together with Corollary \ref{cor-char}. It remains to show that $L_{\g}(\eta) \otimes L_{\mathfrak s}(\mu)$ has a direct summand isomorphic to $L_{\g}(\lambda)$, and that there is no other direct summand $L_{\g}(\mu)$  such that $\chi_{\mu - {\bf 1}} = \chi_{\lambda - 1}$. Here $\eta = a{\bf 1}, a_k{\bf 1}, \omega_{n-k}$, for cases (i), (ii), (iii), respectively. This follows from the the Gelfand-Tsetlin decomposition of $L_{\mathfrak s}(\mu) = \bigoplus_{p} L_{\g}(\mu^{(p)})$ into $\g$-modules  $L_{\g}(\mu^{(p)})$ and  looking at the supports of the resulting products $L_{\g}(\eta) \otimes L_{\g}(\mu^{(p)})$. \end{proof}

\begin{remark}
A Jordan-H\"older decomposition of $T(P,V_a) \otimes V$, where $a \in \C$ and $V$ is a simple finite-dimensional $\mathfrak{sl}(n+1)$-module, is described explicitly in \cite{BL}. In particular, the authors prove their old conjecture that every simple torsion-free $\mathfrak{sl}(n+1)$-module is isomorphic to a  submodule of $T(P,V_a) \otimes V$, for particular choices of $P$, $a$, and $V$. One should note that for some choices of $a$, the corresponding translation functors are not equivalences of categories, and that is why it is better to work with  $T(P,\bigwedge^i \C^n)$.
\end{remark}

Combining Propositions \ref{prop-v-a}, \ref{prop-exterior}, and  \ref{prop-translation}, we obtain one of our main results in the paper. 
\begin{theorem} \label{thm-simple}
Let $\lambda \in \Lambda_{\g}^+$, $g \in \mathcal O_0$, and $S \subset \llbracket n \rrbracket$. Then the following hold.
\begin{itemize}
\item[(i)] If $\lambda - {\bf 1} \in  \mathcal N$, then $T(g, L_{\g}(\lambda), S)$ is simple.
\item[(ii)]  If $\lambda - {\bf 1} \in  \mathcal S$, then $T(g, L_{\g}(\lambda), S)$ is simple if and only if $S = \emptyset$ or $S = \llbracket n \rrbracket$.
\item[(iii)]  If $\lambda- {\bf 1}  \in  \mathcal R$, then: 
\begin{itemize}
\item[(a)] if $\lambda - {\bf 1} \in \mathcal H^0$, $T(g, L_{\g}(\lambda), S)$ is simple if and only if $S =  \llbracket n \rrbracket$. 
\item[(b)]  if $\lambda - {\bf 1} \in \mathcal H^i$, $1<i<n$, $T(g, L_{\g}(\lambda), S)$ is not simple,
\item[(c)]  if $\lambda - {\bf 1} \in \mathcal H^n$, $T(g, L_{\g}(\lambda), S)$ is simple if and only if $S = \emptyset$.
\end{itemize}
\end{itemize}
For the explicit conditions when $\lambda - {\bf 1} \in  \mathcal N$, etc. see Lemma \ref{lem-lambda-1}.
\end{theorem}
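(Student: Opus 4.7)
The plan is to reduce every case of the theorem to one of the two already-settled simplicity criteria: Proposition \ref{prop-v-a} for one-dimensional $V$ and Proposition \ref{prop-exterior} for exterior powers $V = \bigwedge^k \C^n$. The reduction is carried out through the translation functor equivalences of Proposition \ref{prop-translation}, using the standard fact that an equivalence of categories carries simple objects to simple objects. Taking $P = e^g(\C[{\bf t}])^{\psi_S}$, so that $T(P, W) = T(g, W, S)$ for any $\g$-module $W$, Proposition \ref{prop-translation} applied to this choice of $P$ produces an isomorphism of the form $T(g, L_\g(\lambda), S) \simeq T_V^{\eta - {\bf 1}, \lambda - {\bf 1}} T(g, W, S)$ for an appropriate auxiliary $W \in \{V_a,\, \bigwedge^{n-k}\C^n\}$. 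Since the stated translation functor is an equivalence, simplicity of one side is equivalent to simplicity of the other.

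Case (i) is immediate: Proposition \ref{prop-translation}(i) gives an equivalence with $T(g, V_a, S)$ where $(n+1)(a-1) \notin \Z$, so by Proposition \ref{prop-v-a}(i) the module is simple for every $S$. Case (ii) reduces via Proposition \ref{prop-translation}(ii) to $T(g, V_{a_k}, S)$ with $a_k = k/(n+1)$; since $(n+1)(a_k - 1) = k - n - 1$ lies in $\{-n, \dots, -1\}$ for $k \in \llbracket n \rrbracket$, Proposition \ref{prop-v-a}(iii) gives simplicity iff $S = \emptyset$ or $S = \llbracket n \rrbracket$.

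Case (iii) splits further according to which $\mathcal H^i$ contains $\lambda - {\bf 1}$. Proposition \ref{prop-translation}(iii) produces an equivalence with $T(g, \bigwedge^{n-i} \C^n, S)$. When $i = 0$ the auxiliary module is $\bigwedge^n \C^n$ (one-dimensional of weight ${\bf 1}$), and Proposition \ref{prop-exterior}(iii) yields (a). When $i = n$ the auxiliary module is $\bigwedge^0 \C^n$, the trivial representation, and Proposition \ref{prop-exterior}(ii) yields (c). When $0 < i < n$ we get $\bigwedge^{n-i} \C^n$ with $0 < n - i < n$, which is never simple by Proposition \ref{prop-exterior}(i), yielding (b).

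The main obstacle is bookkeeping rather than substance: one must verify that the arithmetic value of $a$ (respectively, of $k$) produced by the translation functor in Proposition \ref{prop-translation} lands in precisely the sub-case of Proposition \ref{prop-v-a} (respectively, Proposition \ref{prop-exterior}) corresponding to the required simplicity pattern. Lemma \ref{lem-lambda-1} is invoked here to translate the abstract membership $\lambda - {\bf 1} \in \mathcal N, \mathcal H^{k-1,k}, \mathcal H^k$ into the explicit numerical hypotheses that Proposition \ref{prop-translation} requires.
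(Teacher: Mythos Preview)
Your proposal is correct and follows exactly the approach the paper intends: the paper's proof consists of the single sentence ``Combining Propositions \ref{prop-v-a}, \ref{prop-exterior}, and \ref{prop-translation}, we obtain one of our main results,'' and you have simply unpacked that combination, specializing $P = e^g(\C[{\bf t}])^{\psi_S}$ in Proposition \ref{prop-translation} and matching each translation-functor output to the appropriate clause of Proposition \ref{prop-v-a} or \ref{prop-exterior}. Your arithmetic checks (that $(n+1)(a_k-1)=k-n-1\in\{-n,\dots,-1\}$, and that $\lambda-{\bf 1}\in\mathcal H^i$ corresponds to $\bigwedge^{n-i}\C^n$) are exactly the bookkeeping the paper leaves implicit.
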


\section{Applications of exponential tensor modules in the case $\deg g \leq 1$}

\subsection{The case $g=0$: injective coherent families}
In this subsection we consider the case when $g=0$ and $V$ is a simple finite-dimensional $\mathfrak{gl}(n)$-module.

As mentioned earlier, coherent families are one of the main tools that O. Mathieu used in the classification of all simple torsion free modules of ${\mathfrak s}$ as the latter are submodules of partly-irreducible coherent families. We call a coherent family \emph{partly-irreducible} (or just \emph{irreducible}) if there is $\lambda \in \h^*$ such that $\mathcal M [\lambda] = \bigoplus_{\alpha \in \Z \Delta} \mathcal M^{\lambda + \alpha}$ is an irreducible ${\mathfrak s}$-module. The coherent families $\mathcal T (V,S)$ are partly-irreducible except for the case when $V = L_{\g}(\lambda)$ and $\lambda - {\bf 1} \in  \cup_{i=1}^{n}{\mathcal H^i}$. Below we provide more details about the non-irreducible case. For details we refer the reader to \S 11 in \cite{M}.

If $\mu \in {\mathcal H^0}$, we have a complex
$$
0 \to  \mathcal T (L_{\g}(w_n\cdot \mu + {\bf 1}), S) \to  \mathcal T (L_{\g}(w_{n-1}\cdot \mu + {\bf 1}), S)     \to \cdots \to \mathcal T (L_{\g}(w_{0}\cdot \mu + {\bf 1}), S) \to 0
$$
In the case $\mu =0$, we have $L_{\g}(w_{n-i}\cdot \mu + {\bf 1}) = L_{\g}(\omega_i) = \bigwedge^i (\mathbb C^n)$,   and the above complex is the de Rham complex defined in \eqref{de-rham}. The complex for arbitrary $\mu$ is obtained from the  complex for $\mu =0$ after applying the translation functor from Proposition \ref{prop-translation}(iii). 

Let now $\lambda - {\bf 1} \in {\mathcal H}^i$ for $0\leq i \leq n$, and  let  $\mu \in {\mathcal H}^0 $  be such that  $\lambda - {\bf 1} = w_i \cdot \mu$. Then we can define the coherent family 
$ \mathcal T' (L_{\g}(\lambda), S)  = \sum_{j=0}^i (-1)^{j} \mathcal T (L_{\g}(\lambda[j]), S)$,
where $\lambda[j] = w_{i-j}\cdot \mu + {\bf 1}$. Set for convenience $ \mathcal T' (L_{\g}(\lambda), S) =  \mathcal T (L_{\g}(\lambda), S)$ if $\lambda - {\bf 1} \notin \mathcal R$. 

Another important feature of the families $ \mathcal T (V, S) $ and  $ \mathcal T' (V, S)$ is their injectivity. For a subset $\Sigma$ of roots of $\mathfrak s$ and an ${\mathfrak s}$-module $M$, we say that $M$ is \emph{$\Sigma$-injective} if  every nonzero $\alpha$-root vector  $x_{\alpha}$ of $\mathfrak s$ acts injectively on $M$ for every $\alpha \in \Sigma$. Here is one important particular case of $\Sigma$. Let $\tilde{S}$ be a nonempty proper subset of $ \llbracket n+1 \rrbracket$, and let 
\begin{equation} \label{sigma-s}
\Sigma_{\tilde{S}} = \{ \varepsilon_i - \varepsilon_j \; | \; i \in \tilde{S}, j \notin \tilde{S} \}
\end{equation}

We say that an ${\mathfrak s}$-module $M$ is \emph{$\tilde{S}$-injective} if it is $\Sigma_{\tilde{S}}$-injective.

\begin{proposition} \label{prop-simple-fam}
Let $V$ be a simple finite-dimensional $\mathfrak{gl}(n)$-module and let $S$ be a proper subset of $\llbracket n \rrbracket$. Then $\mathcal T' (V,S)$ is an $\left(S \cup \{ n+1 \} \right)$-injective partly-irreducible coherent family, and  $F_{\tau} \mathcal T' (V,S)$ is $\left(\llbracket n\rrbracket \setminus S\right)$-injective partly-irreducible coherent family. 
\end{proposition}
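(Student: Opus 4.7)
The plan is to prove the first claim and deduce the second from it by exploiting the involution $\tau$. Within the first claim, the $(S\cup\{n+1\})$-injectivity is a direct calculation from the differential operator presentation, while partly-irreducibility is obtained by passing to a generic block.

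\textbf{Injectivity via the $\omega_{V,S}$-presentation.} Using the identification $T(P^{\psi_S},V) \simeq (P\otimes V, \omega_{V,S})$ and the formulas of Proposition~\ref{omega-v-s}, I first verify that every weight space of $\mathcal T(V,S)$ has dimension $\dim V$, with basis of the form $\{\mathbf t^{\mu(\nu)}\otimes v^\nu\}$, where $v^\nu$ ranges over a weight basis of $V$ and $\mu(\nu)_k=\epsilon_k(\lambda_k-\nu_k-c_k)$ is determined by $\lambda$ and $\nu$, with $(\epsilon_k,c_k)=(1,0)$ for $k\in S$ and $(-1,-1)$ for $k\notin S$. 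A direct calculation on this basis then gives: for $i\in S$, $j\notin S$ the operator $\omega_{V,S}(e_{i,j})=1\otimes E_{ij}+t_it_j\otimes 1$ acts as $\mathrm{Id}_V+E_{ij}|_V$, which is invertible since $E_{ij}$ is nilpotent on the finite-dimensional module $V$; for $i=n+1$, $j\notin S$ the operator $-t_j\otimes 1$ acts as $-\mathrm{Id}_V$. Thus each relevant $e_{i,j}$ is bijective on every weight space, hence injective on all of $\mathcal T(V,S)$.

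\textbf{Passage to $\mathcal T'(V,S)$.} When $V=L_\g(\lambda)$ with $\lambda-\mathbf 1\notin\mathcal R$, $\mathcal T'(V,S)=\mathcal T(V,S)$ and the above suffices. When $\lambda-\mathbf 1\in\mathcal H^i$, I realize $\mathcal T'(V,S)$ as the appropriate cohomology of the complex obtained from the de Rham-type complex of Section~3 by applying the translation functor of Proposition~\ref{prop-translation}(iii) (equivalently, using the exactness of $T(P,-)$ recorded in the proof of that proposition). Since each term of the complex is $(S\cup\{n+1\})$-injective by the previous step, and each differential is $\mathfrak{sl}(n+1)$-equivariant, the kernel/image inherits injectivity. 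Furthermore, $\mathcal T'(V,S)$ itself has uniform weight multiplicity (it is a coherent family), so invertibility of $e_{i,j}$ on one weight space propagates to all by the polynomial character of coherent family traces.

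\textbf{Partly-irreducibility.} Pick a coset $\mu\in\C^n/\Z^n$ lying outside the countable set of translates of $\Z$ relevant for singularities. For such generic $\mu$, the block $\mathcal T(V,S)[\mu]=T((\mathbf t^\mu\C[\mathbf t^{\pm1}])^{\psi_S},V)$ has generic central character and is simple by Theorem~\ref{thm-simple}(i) applied to the twisted localization of $T(0,V,S)$ (which is a localization of $T(0,V,S)$ at a generic point of $\mathrm{Spec}\,U(\h)$). In the regular cases $\lambda-\mathbf 1\in\mathcal R$, the same generic-block argument applies to a summand of the middle cohomology of the translated complex, giving a simple bounded module; this is the coherent extension in the sense of Mathieu (cf.\ \S11 of \cite{M}).

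\textbf{The $F_\tau$-twist.} Since $\tau(e_{i,j})=-e_{j,i}$, the functor $F_\tau$ converts $\tilde S$-injectivity into $(\llbracket n+1\rrbracket\setminus\tilde S)$-injectivity. Setting $\tilde S=\llbracket n\rrbracket\setminus S$ yields $\llbracket n+1\rrbracket\setminus\tilde S=S\cup\{n+1\}$, so the second claim follows from the first; moreover $F_\tau$ preserves the property of being a partly-irreducible coherent family. The main obstacle is the partly-irreducibility in the regular case $\lambda-\mathbf 1\in\mathcal H^i$, where one must realize $\mathcal T'(V,S)$ as an actual module (rather than a formal Euler characteristic in the Grothendieck group) and extract simplicity at the generic block from the cohomology of the translated de Rham-type complex.
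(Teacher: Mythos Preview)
Your injectivity calculation and the $F_\tau$-reduction are correct and coincide with the paper's approach: the paper simply says ``injectivity follows by the explicit formulas defining $\omega_{V,S}$,'' and your computation that $e_{i,j}$ acts on weight spaces as $\mathrm{Id}_V+E_{ij}|_V$ for $i\in S$, $j\notin S$, and $e_{n+1,j}$ as $-\mathrm{Id}_V$ for $j\notin S$, is exactly the intended verification. One small caveat: the assertion that ``the kernel/image inherits injectivity'' is not true in general for a merely injective operator, but here it is fine because the relevant $e_{i,j}$ are in fact \emph{bijective} between finite-dimensional weight spaces, so they induce isomorphisms of the weight-wise complex and hence bijections on any subquotient or cohomology.

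The partly-irreducibility argument, however, has a genuine gap. Theorem~\ref{thm-simple} concerns the modules $T(g,L_\g(\lambda),S)$, whose underlying $\mathcal D(n)$-module is $e^g(\C[\mathbf t])^{\psi_S}$; it says nothing about the blocks $T\bigl((\mathbf t^{\mu}\C[\mathbf t^{\pm1}])^{\psi_S},V\bigr)$ of the coherent family. Moreover, part~(i) of that theorem requires $\lambda-\mathbf 1\in\mathcal N$, while the proposition must cover all simple finite-dimensional $V$, including $\lambda-\mathbf 1\in\mathcal S$ and $\mathcal R$. Note also that by Corollary~\ref{cor-char} the central character of every block is $\chi_{\lambda-\mathbf 1}$ and does not vary with $\mu$, so ``generic central character'' is not the mechanism that makes the generic block simple. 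The paper avoids a direct generic-block argument: it cites Mathieu's Theorem~11.4, which proves partly-irreducibility of $\mathcal T'(V,\llbracket n\rrbracket)$, and then transfers this to arbitrary $S$ via the observation that the semisimplification $\mathcal T'(V,S)^{\mathrm{ss}}$ is independent of $S$. Since a block $\mathcal M[\lambda]$ is simple iff $\mathcal M[\lambda]^{\mathrm{ss}}$ is simple, partly-irreducibility is a property of the semisimplification, and the result follows. Your route could in principle be made to work by proving directly that the generic block is simple (e.g.\ using the polynomiality of the trace functions on a coherent family together with a single known simple block), but as written the appeal to Theorem~\ref{thm-simple}(i) does not establish this.
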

\begin{proof}
The irreducibility of $\mathcal T' (V,S)$, and hence of  $F_{\tau} \mathcal T' (V,S)$, follows from Theorem 11.4 in \cite{M}. Although the latter theorem concerns the case  $S = \llbracket n\rrbracket$,  the case for arbitrary $S$ follows from the fact that the semisimplification $\mathcal T' (V,S)^{\rm ss}$ of $\mathcal T' (V,S)$ does not depend on the choice of  $S$. The injectivity follows by the  explicit formulas defining $\omega_{V,S}$ in Proposition \ref{omega-v-s}.
\end{proof}

Coherent families may be constructed also via twisted localization of highest weight modules. We  outline the construction of these families below and refer the reader to \S 4 in \cite{M} for details. 

If $\Sigma$ is a set of commuting roots that is a basis of $\Z \Delta$ and $M$ is a $\Sigma$-injective bounded simple module, then 
$$
\mathcal E_{\Sigma} (M) = \bigoplus_{\lambda \in \h^*/\Z\Delta} D_{\Sigma}^{\lambda} M.
$$
is a $\Sigma$-injective irreducible coherent family containing $M$ as a submodule. We call $\mathcal E_{\Sigma} (M) $ \emph{the $\Sigma$-injective coherent  extension of $M$}.  The injective coherent extensions are  unique in the sense of the following proposition.

\begin{proposition} \label{prop-inj-unique}Let  $\Sigma$ be a set of commuting roots that is a basis of $\Z \Delta$ and $M$ be a simple $\Sigma$-injective bounded ${\mathfrak s}$-module. Then $\mathcal E_{\Sigma} (M) $ is the unique, up to isomorphism,  $\Sigma$-injective irreducible coherent family containing a submodule isomorphic to $M$.
\end{proposition}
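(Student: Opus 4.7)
The plan is to construct an isomorphism $\mathcal{E}_\Sigma(M) \xrightarrow{\sim} \mathcal{N}$ for any $\Sigma$-injective irreducible coherent family $\mathcal{N}$ containing $M$ as a submodule. The first observation is that $\Sigma$-injectivity of $\mathcal{N}$ is automatically bijectivity: each $f_\alpha$ with $\alpha \in \Sigma$ is an injective linear map $\mathcal{N}^\lambda \to \mathcal{N}^{\lambda - \alpha}$ between weight spaces of equal dimension $d = \deg \mathcal{N}$, hence a bijection. This upgrades $\mathcal{N}$ to a $D_\Sigma U(\mathfrak{s})$-module, and the universal property of Ore localization extends the inclusion $M \hookrightarrow \mathcal{N}$ to $D_\Sigma M \hookrightarrow \mathcal{N}$.

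Next I would place a copy of each twisted localization $D_\Sigma^\mu M$ inside $\mathcal{N}$. When $\mu$ has integer coordinates in the basis $\Sigma$, the product $\prod_\alpha f_\alpha^{\mu_\alpha}$ is invertible in $D_\Sigma U(\mathfrak{s})$ and its action on $D_\Sigma M \subset \mathcal{N}$ yields $D_\Sigma^\mu M \hookrightarrow \mathcal{N}$ directly. For general $\mu$, I would compare $\mathcal{N}$ to the twist $\Phi_\mu \mathcal{N}$: by the coherent family axiom on $\mathcal{N}$, the trace functions $\lambda \mapsto \Tr(u \,|\, (\Phi_\mu \mathcal{N})^\lambda)$ for $u \in U(\mathfrak{s})^0$ are polynomial in both $\mu$ and $\lambda$, and they coincide with $\Tr(u \,|\, \mathcal{N}^\lambda)$ whenever $\mu$ has integer coordinates in $\Sigma$, hence for all $\mu$ by polynomial vanishing on a Zariski dense set. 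This gives $(\Phi_\mu \mathcal{N})^{\rm ss} \cong \mathcal{N}^{\rm ss}$, and the irreducibility of $\mathcal{N}$ as a coherent family upgrades this to a genuine $\mathfrak{s}$-module isomorphism $\Phi_\mu \mathcal{N} \cong \mathcal{N}$. Composing $D_\Sigma^\mu M = \Phi_\mu D_\Sigma M \hookrightarrow \Phi_\mu \mathcal{N} \cong \mathcal{N}$ produces the sought embedding, whose image lies in the coset component $\mathcal{N}[\Supp(M) - \mu]$.

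Summing these embeddings over coset representatives $\mu \in \mathfrak{h}^*/\mathbb{Z}\Delta$ produces a morphism of coherent families $\mathcal{E}_\Sigma(M) = \bigoplus_\mu D_\Sigma^\mu M \to \mathcal{N}$ of common degree $d$, which a weight-by-weight dimension comparison forces to be an isomorphism. The principal obstacle is the promotion $(\Phi_\mu \mathcal{N})^{\rm ss} \cong \mathcal{N}^{\rm ss} \Rightarrow \Phi_\mu \mathcal{N} \cong \mathcal{N}$: semisimplification is blind to nontrivial extensions on the singular cosets where $\mathcal{N}[\lambda]$ fails to be simple, and ruling out such extensions requires the full strength of the irreducibility hypothesis together with a careful analysis of the submodule lattice over the singular locus. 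This step is the technical heart of Mathieu's uniqueness machinery for irreducible coherent families and is where the assumptions of the proposition are used most crucially.
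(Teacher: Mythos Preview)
Your approach has a genuine gap at exactly the place you flag, and that gap is not cosmetic. The promotion $(\Phi_\mu \mathcal N)^{\rm ss} \simeq \mathcal N^{\rm ss} \Rightarrow \Phi_\mu \mathcal N \simeq \mathcal N$ is not part of Mathieu's machinery: Mathieu's uniqueness result (Proposition~4.8 in \cite{M}) is for \emph{semisimple} coherent extensions only. Irreducibility of $\mathcal N$ merely says that some single coset component $\mathcal N[\lambda_0]$ is simple; it gives you no control over the extension structure on the other cosets, so it does not by itself rule out that $\Phi_\mu \mathcal N$ and $\mathcal N$ differ by a nontrivial extension on a singular coset. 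Your plan to build one global embedding $\mathcal E_\Sigma(M) \hookrightarrow \mathcal N$ therefore stalls.

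The paper's proof avoids this entirely by abandoning the attempt to construct a compatible global map and instead proving an \emph{abstract} isomorphism coset by coset. Fix $\lambda$ and set $M_1 = \mathcal N[\lambda]$, $M_2 = D_\Sigma^\lambda M$. By Mathieu's semisimple uniqueness one has $(M_1)^{\rm ss} \simeq (M_2)^{\rm ss}$. Both $M_1$ and $M_2$ are $\Sigma$-injective (hence $\Sigma$-bijective, as you observed) and dense of the same degree $d$. Now comes the step you are missing: since submodules of $\Sigma$-injective modules are $\Sigma$-injective, and since the two modules share composition factors, one can extract a common simple $\Sigma$-injective submodule $L$ of both. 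Density and degree comparison then force $M_1 \simeq D_\Sigma L \simeq M_2$. Summing over coset representatives gives $\mathcal N \simeq \mathcal E_\Sigma(M)$. The point is that a dense $\Sigma$-bijective module is recovered as the $\Sigma$-localization of any one of its simple submodules, so matching semisimplifications plus $\Sigma$-injectivity on both sides is already enough; one never needs to compare $\mathcal N$ with its twist $\Phi_\mu \mathcal N$.
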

\begin{proof}
Let $\mathcal M$ be a  $\Sigma$-injective irreducible coherent family containing $M$ as a submodule and let $\lambda \in h^*$. Consider the modules $M_1 = \bigoplus_{\alpha \in \Z \Delta} \mathcal M^{\lambda + \alpha}$ and $M_2 =  D_{\Sigma}^{\lambda} M$. These modules have the property $(M_1)^{\rm ss} \simeq (M_2)^{\rm ss}$ by the uniqueness of the semisimple coherent extension (Proposition 4.8 in \cite{M}). Since $M_1$ and $M_2$ are both $\Sigma$-injective, there is a simple $\Sigma$-injective module $L$ that is isomorphic to submodules of $M_1$ and $M_2$. On the other hand, both $M_1$ and $M_2$ are dense, i.e. $\dim M_1^{\lambda + \alpha} = \dim M_2^{\lambda + \alpha} = \deg M$ for all $\alpha \in \Z \Delta$. Hence, $M_1 \simeq D_{\Sigma}L  \simeq M_1$, which implies the result.
\end{proof}

\begin{remark}
Britten and Lemire introduced slightly different notion of $\Sigma$-injective coherent family in \cite{BL}, see Definition 2.1. They imposed more restrictions on the action of the root elements on the family and as a result established a uniqueness result for the $\Sigma$-injective coherent families of degree $d$ containing a $\Sigma$-injective (not necessarily simple) bounded module $M$ of degree $d$. This uniqueness result was used to show that every simple torsion free module is a subquotient of $\mathcal M \otimes F$ where $\mathcal M$ is a coherent family of degree $1$ and $F$ is a finite-dimensional simple ${\mathfrak s}$-module.
\end{remark}

 Propositions \ref{prop-simple-fam} and \ref{prop-inj-unique} imply the following.

\begin{corollary} If $\tilde{S}$ is a nonempty proper subset of $\llbracket n+1 \rrbracket$, and $\mathcal M$ is a $\tilde{S}$-injective partly irreducible coherent family, then the following hold. 
\begin{itemize}
\item[(i)] If $n+1 \in \tilde{S}$, then  $\mathcal M \simeq \mathcal T' (V,S)$ for $S = \tilde{S} \setminus \{n+1 \}$ and some $V$.
\item[(ii)] If $n+1 \notin \tilde{S}$, then  $\mathcal M \simeq F_{\tau} \mathcal T' (V,S)$ for $S =  \llbracket n \rrbracket \setminus \tilde{S}$ and some $V$.
\end{itemize}
Furthermore, every simple infinite-dimensional bounded $\mathfrak s$-module is a submodule of $\mathcal T' (V,S)$ or  $F_{\tau} \mathcal T' (V,S)$ for some $V$ and $S$.
\end{corollary}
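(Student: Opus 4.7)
The plan is to combine Propositions \ref{prop-simple-fam} and \ref{prop-inj-unique} with the realization content of Proposition \ref{prop-translation}. Fix a nonempty proper $\tilde{S} \subset \llbracket n+1 \rrbracket$ and let $\mathcal M$ be a $\tilde{S}$-injective partly-irreducible coherent family. I would first select a basis $\Sigma$ of $\Z\Delta$ contained in $\Sigma_{\tilde{S}}$; since $\Sigma_{\tilde{S}}$ contains all roots going from $\tilde{S}$ to its complement, one can always extract $n$ commuting such roots spanning $\Z\Delta$ (e.g., fix $i_0\in \tilde{S}$, $j_0\notin \tilde{S}$ and take $\varepsilon_{i}-\varepsilon_{j_0}$ for $i\in \tilde{S}\setminus\{i_0\}$ together with $\varepsilon_{i_0}-\varepsilon_{j}$ for $j\notin \tilde{S}$). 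Partly-irreducibility of $\mathcal M$ then yields a simple infinite-dimensional bounded submodule $M$, which is $\Sigma$-injective by hypothesis, and Proposition \ref{prop-inj-unique} gives $\mathcal M \simeq \mathcal E_{\Sigma}(M)$.

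Next, to prove (i), assume $n+1\in \tilde{S}$ and set $S = \tilde{S}\setminus\{n+1\}$. By Proposition \ref{prop-simple-fam}, for any simple finite-dimensional $V$, the coherent family $\mathcal T'(V,S)$ is $(S\cup\{n+1\})$-injective, i.e.\ $\tilde{S}$-injective, and partly-irreducible. Hence if we can produce $V$ so that $\mathcal T'(V,S)$ contains a submodule isomorphic to $M$, the uniqueness Proposition \ref{prop-inj-unique} forces $\mathcal M \simeq \mathcal E_{\Sigma}(M) \simeq \mathcal T'(V,S)$. To exhibit such a $V$, I would argue as in Mathieu's classification: the central character of $M$ determines (via Corollary \ref{cor-char}) the central character of the desired $V = L_{\g}(\lambda)$, and then Proposition \ref{prop-translation}, applied to $T(P,V_a)$ or $T(P,\bigwedge^k\C^n)$ with $P = \left(\mathbb C[\mathbf t]^{\psi_S}\right)^{\exp_0}$ extended to the Laurent version $\mathcal T(V,S)$, realizes $\mathcal T'(V,S)$ as the image under a translation-functor equivalence of a coherent family containing $M$; equivalence of categories preserves the containment.

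Case (ii) reduces to case (i) via the twist $F_{\tau}$. The automorphism $\tau$ sends the root $\varepsilon_i-\varepsilon_j$ to $\varepsilon_j-\varepsilon_i$, so $F_{\tau}\mathcal M$ is a $\tilde{S}^c$-injective partly-irreducible coherent family, where $\tilde{S}^c = \llbracket n+1\rrbracket \setminus \tilde{S}$ contains $n+1$. Applying (i) to $F_{\tau}\mathcal M$ produces $V$ and $S = \llbracket n\rrbracket\setminus \tilde{S}$ with $F_{\tau}\mathcal M \simeq \mathcal T'(V,S)$, and applying $F_{\tau}$ once more (using $F_{\tau}^2\simeq \mathrm{Id}$) gives $\mathcal M \simeq F_{\tau}\mathcal T'(V,S)$, as required. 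The final assertion is then immediate: any simple infinite-dimensional bounded $\mathfrak s$-module $M$ is, by Fernando's theorem, $\Sigma_{\tilde{S}}$-injective for some nonempty proper $\tilde{S}$; its injective coherent extension $\mathcal E_{\Sigma}(M)$ is a $\tilde{S}$-injective partly-irreducible coherent family containing $M$, and by (i) or (ii) this extension is isomorphic to $\mathcal T'(V,S)$ or $F_{\tau}\mathcal T'(V,S)$.

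The main obstacle lies in the middle step: identifying the $\mathfrak{gl}(n)$-module $V$ for which $\mathcal T'(V,S)$ actually contains a submodule isomorphic to $M$. This requires balancing three ingredients---matching central characters via Corollary \ref{cor-char}, landing in the correct Weyl chamber (in the regular integral case, this is the role of the exterior powers $\bigwedge^{n-i}\C^n$ and the decomposition $\mathcal R = \sqcup \mathcal H^i$), and ensuring that the translation functor used is an equivalence of categories so that the embedding $M\hookrightarrow \mathcal T'(V,S)$ survives. Once this compatibility is set up, Proposition \ref{prop-inj-unique} closes the argument mechanically.
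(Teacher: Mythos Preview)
Your overall architecture matches the paper's one-line proof (``Propositions \ref{prop-simple-fam} and \ref{prop-inj-unique} imply the following''): pick $\Sigma\subset\Sigma_{\tilde S}$ spanning $\Z\Delta$, reduce to $\mathcal E_\Sigma(M)$, and match it with a suitable $\mathcal T'(V,S)$ via injectivity and uniqueness. The reduction of (ii) to (i) via $F_\tau$ and the final appeal to Fernando's theorem are also exactly what the paper has in mind.

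Where you diverge is in the ``main obstacle'' you flag: producing the $V$ for which $\mathcal T'(V,S)$ contains $M$. Your proposed mechanism---translate $\mathcal T'(V_a,S)$ or $\mathcal T'(\bigwedge^k\C^n,S)$ and carry $M$ along---is circular as written: you assert that the source family already ``contains $M$'' (up to translation), but that is precisely what has to be shown, and matching central characters via Corollary \ref{cor-char} does not by itself produce an embedding. The paper avoids this entirely. The existence of $V$ with $\mathcal M^{\rm ss}\simeq \mathcal T'(V,S)^{\rm ss}$ is taken directly from Mathieu's Theorem 11.4 (already cited in the proof of Proposition \ref{prop-simple-fam}), which says every semisimple irreducible coherent family is of the form $\mathcal T'(V)^{\rm ss}$. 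Once the semisimplifications agree and both families are $\Sigma$-injective, the \emph{proof} of Proposition \ref{prop-inj-unique} (not just its statement) gives $\mathcal M[\lambda]\simeq D_\Sigma L\simeq \mathcal T'(V,S)[\lambda]$ for each $\lambda$, so $\mathcal M\simeq\mathcal T'(V,S)$ without ever needing to locate $M$ inside $\mathcal T'(V,S)$ beforehand. In short, drop the translation-functor detour and cite Mathieu's classification directly; then the uniqueness argument closes mechanically, just as you say.
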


\begin{remark}
As mentioned in the proof of Proposition \ref{prop-simple-fam},  $\mathcal T' (V)^{\rm{ss}}  = \mathcal T' (V,S)^{\rm{ss}}$ is independent of the choice of $S$. In fact, if $L$ is an infinite-dimensional simple submodule of $\mathcal T' (V)^{\rm{ss}}$, then $\mathcal T' (V)^{\rm{ss}}$ is the unique, up to isomorphism, semisimple partly-irreducible coherent family containing $L$ as a submodule. In particular, if $L$ is $\Sigma$-injective, then $\mathcal T' (V)^{\rm{ss}} \simeq \mathcal E_{\Sigma}(L)^{\rm ss}$. Since the $\tilde{S}$-injective  partly-irreducible coherent families are  classified, it is natural to attempt to classify all  injective families $\mathcal M$ that are indecomposable, i.e.  such that all $\mathcal M [\lambda]$ are indecomposable modules. All $\mathcal T '(V,S)$ and $F_{\tau}\mathcal T' (V,S)$ will be in this classification list except two of them, $\mathcal T (V,\llbracket n \rrbracket)$ and $F_{\tau}\mathcal T (V,\llbracket n \rrbracket)$,  as they are not $\tilde{S}$-injective for any $\tilde{S}$.
\end{remark}

\subsection{The case $\deg g =1$ and $\h$-free modules of $\mathfrak{sl}(n+1)$}
In this subsection we fix $g (t) = \sum_{i=1}^nb_it_i$ for $b_i \in \C^*$. If ${\bf b} = (b_1,b_2,...,b_n)$, we will write $T({\bf b}, V,S)$ for $T(g, V,S)$ and ${\bf bt}$ for $g(t) = \sum_{i=1}^n b_it_i $.

\begin{theorem} \label{thm-h-free}
Let ${\bf b} \in \left( \C^{*} \right)^n$. Then the following correspondence defines a homomorphism $U({\mathfrak s}) \to \End (\C[{\bf h}] \otimes V)$:
\begin{eqnarray*}
h_k &\mapsto& h_k \otimes 1, \; \mbox{ for all } k, \\
e_{ij} &\mapsto& \frac{b_j}{b_i} h_i \sigma_i\sigma_j^{-1} \otimes 1 - \frac{b_j}{b_i} \sigma_i\sigma_j^{-1} \otimes E_{ii} + \sigma_i\sigma_j^{-1} \otimes E_{ij}, 	\; \mbox{ for } i,j \in S, \\
e_{ij} &\mapsto&  \frac{b_i}{b_j}(h_j +1) \sigma_i\sigma_j^{-1} \otimes 1 - \frac{b_i}{b_j}\sigma_i\sigma_j^{-1} \otimes E_{jj} + \sigma_i\sigma_j^{-1} \otimes E_{ij}, \; \mbox{ for }  i,j \notin S,\\
e_{ij}  &\mapsto&  -b_ib_j\sigma_i\sigma_j^{-1} \otimes 1 + \sigma_i\sigma_j^{-1} \otimes E_{ij}, \;\mbox{ for } i\notin S, j \in S, \\
e_{ij} &\mapsto& \frac{-1}{b_ib_j} h_i(h_j +1) \sigma_i\sigma_j^{-1} \otimes 1 + \frac{1}{b_ib_j}(h_j +1) \sigma_i\sigma_j^{-1} \otimes E_{ii} + \frac{1}{b_ib_j} h_i \sigma_i\sigma_j^{-1} \otimes E_{jj}  \\ && - \frac{1}{b_ib_j} \sigma_i\sigma_j^{-1} \otimes E_{ii}E_{jj} +\sigma_i\sigma_j^{-1} \otimes E_{ij}, \;\mbox{ for }  i \in S, j\notin S,\\
e_{n+1,j} &\mapsto&  -b_j\sigma_j^{-1} \otimes 1, \mbox{ for }  j \in S,\\
e_{n+1,j}  &\mapsto& \frac{1}{b_j} h_j\sigma_j^{-1} \otimes 1 - \frac{1}{b_j} \sigma_j^{-1} \otimes E_{jj} + \frac{1}{b_j} \sigma_j^{-1} \otimes 1, \mbox{ for }  j \notin S,\\
e_{i,n+1} &\mapsto&  \frac{1}{b_i}(\sum_{j=1}^n h_j -1)h_i \sigma_i \otimes 1 - \frac{1}{b_i}(\sum_{j=1}^n h_j)\sigma_i \otimes E_{ii} - \sum_{j\notin S} b_j \sigma_i \otimes E_{ij} + \sum_{p\in S} \frac{h_p}{b_p} \sigma_i \otimes E_{ip} \\ && -  \sum_{p\in S} \frac{1}{b_p} \sigma_i \otimes E_{ip}E_{pp} +  \sum_{p\in S} \frac{1}{b_p} \sigma_i \otimes E_{ip}, \; \mbox{ for }  i \in S,\\
e_{i,n+1} &\mapsto&  -b_i(\sum_{j=1}^n h_j -1) \sigma_i \otimes 1 - \sum_{j\notin S} b_j \sigma_i \otimes E_{ij} + \sum_{p\in S} \frac{h_p}{b_p} \sigma_i \otimes E_{ip} -  \sum_{p\in S} \frac{1}{b_p} \sigma_i \otimes E_{ip}E_{pp} \\ && +  \sum_{p\in S} \frac{1}{b_p} \sigma_i \otimes E_{ip},  \mbox{ for }  i \notin S.
	\end{eqnarray*}
	This homomorphism endows the space $\C[{\bf h}] \otimes V$ with an ${\mathfrak s}$-module structure. The resulting module is ${\h}$-free of rank $\dim V$, and it is isomorphic to $T({\bf b}, V,S)$.
	\end{theorem}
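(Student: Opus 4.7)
The plan is as follows. First, I would realize $T(\mathbf{b}, V, S)$ in a form that is amenable to direct computation. Using the isomorphism noted right before Lemma \ref{lem-weight} (that the $\mathfrak{s}$-module on $P \otimes V$ obtained from $\omega_{V,S}$ is isomorphic to $T(P^{\psi_S}, V)$), we get
\[
T(\mathbf{b}, V, S) \;=\; T\bigl((\mathcal{O}^{\psi_S})^{\exp_{\mathbf{bt}}}, V\bigr) \;\simeq\; \text{$\mathfrak{s}$-module on } \mathcal{O}^{\exp_{\mathbf{bt}}} \otimes V \text{ via } \omega_{V,S},
\]
i.e.\ the underlying vector space is $\mathbb{C}[\mathbf{t}] \otimes V$ and each root operator is obtained from Proposition \ref{omega-v-s} by substituting the twisted operators $\partial_j \mapsto \partial_j + b_j$ (coming from $\theta_{\mathbf{bt}}$), while keeping $t_j$ unchanged.

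Second, I would prove $\mathfrak{h}$-freeness of rank $\dim V$ in this realization. Inspecting the formulas in Proposition \ref{omega-v-s}, after the shift $\partial_k \mapsto \partial_k + b_k$, the operator $h_k$ on $\mathbb{C}[\mathbf{t}] \otimes V$ equals $\varepsilon_k b_k\, t_k \otimes 1$ (with sign $\varepsilon_k = +1$ for $k \in S$ and $-1$ for $k \notin S$) plus terms of lower order in $t_k$ with scalar coefficients depending on $E_{kk}$. Since every $b_k \neq 0$, for any basis $\{v_1, \dots, v_d\}$ of $V$ the vectors $h_1^{m_1} \cdots h_n^{m_n}(1 \otimes v_i)$ are triangular in the multi-degree of $\mathbf{t}$ with nonzero leading coefficient $\prod_k(\varepsilon_k b_k)^{m_k}$, hence form a $\mathbb{C}$-basis of $\mathbb{C}[\mathbf{t}] \otimes V$. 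Consequently $T(\mathbf{b}, V, S)$ is $U(\mathfrak{h})$-free, generated by $\{1 \otimes v_i\}$.

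Third, I would define $\Phi\colon \mathbb{C}[\mathbf{h}] \otimes V \to T(\mathbf{b}, V, S)$ by $\Phi(h^{\mathbf{m}} \otimes v) := h^{\mathbf{m}} \cdot (1 \otimes v)$, where the action on the right-hand side is the $\mathfrak{s}$-action just described. By the previous step this is a $\mathbb{C}$-linear bijection, and it is tautologically $\mathfrak{h}$-equivariant (multiplication by $h_k$ on the left matches the $\mathfrak{s}$-action of $h_k$ on the right). To verify the formulas for $e_{i,j}$, $e_{n+1,j}$, $e_{i,n+1}$, I would proceed in two stages. In stage (a), I compute $X \cdot (1 \otimes v)$ directly from $\omega_{V,S}$ (after the $\partial \mapsto \partial + b$ shift) and rewrite the answer as a $\mathbb{C}[\mathbf{h}]$-combination of the generators $1 \otimes v'$, using the inversion $\varepsilon_k b_k\, t_k(1 \otimes v) = h_k(1\otimes v) - 1 \otimes (\text{correction involving }E_{kk})v$ established in stage two. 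In stage (b), I extend from $1 \otimes v$ to general $h^{\mathbf{m}} \otimes v$ using the identity $X h^{\mathbf{m}} = (\sigma^{\alpha}(h^{\mathbf{m}})) X + \text{lower}$, which is dictated by the commutation $[h_k, X] = \alpha(h_k) X$ and accounts precisely for the factors $\sigma_i \sigma_j^{-1}$, $\sigma_j^{-1}$, and $\sigma_i$ appearing in the statement.

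The main obstacle is the eight-fold case analysis. The formulas for $e_{i, n+1}$ in particular involve substantial sums over $S$ and $\widehat S = \llbracket n \rrbracket \setminus S$, and executing stage (a) requires careful simplification of terms of the form $\sum_{p \in S} t_i t_p \partial_p$ or $\sum_{j \notin S} t_j \partial_j \partial_i$ after the exponential shift, followed by repeated use of the inversion formula above to trade each product $t_k \otimes 1$ for a combination of $h_k \otimes 1$ and $1 \otimes E_{kk}$. The bookkeeping, while elementary, is where the bulk of the proof lies; once each of the eight cases is matched against the corresponding line of the theorem, the $\mathfrak{s}$-equivariance of $\Phi$ follows and gives the desired isomorphism $T(\mathbf{b}, V, S) \simeq \mathbb{C}[\mathbf{h}] \otimes V$.
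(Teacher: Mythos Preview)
Your strategy is sound and lands on the same isomorphism as the paper, but the paper organizes the argument more economically. Rather than transporting each $\mathfrak{s}$-generator through $\Phi$ and doing the eight-case bookkeeping you outline, the paper factors the claimed homomorphism as $\eta\circ\omega_{V,S}$, where $\eta:\mathcal{D}(n)\otimes\End(V)\to\End(\mathbb{C}[\mathbf{h}]\otimes V)$ is given by explicit formulas on the generators $t_i\otimes 1$, $\partial_i\otimes 1$, $1\otimes E_{ij}$ (with two cases each, according to whether $i\in S$). Verifying that $\eta$ is an algebra homomorphism then reduces to the Weyl relations, and in fact to the single identity $h_i\sigma_i-\sigma_ih_i=\sigma_i$; the eight displayed formulas of the theorem drop out by composing. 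The paper then records the isomorphism $T(\mathbf{b},V,S)\to\mathbb{C}[\mathbf{h}]\otimes V$ in closed form,
\[
e^{\mathbf{bt}}\mathbf{t}^{\mathbf{k}}\otimes v\ \longmapsto\ \prod_{i\in S}\binom{h_i-\mathrm{wt}(v)_i}{k_i}\frac{k_i!}{b_i^{k_i}}\ \prod_{j\notin S}\binom{-h_j+\mathrm{wt}(v)_j-1}{k_j}\frac{k_j!}{b_j^{k_j}}\otimes v,
\]
which is precisely the inverse of your $\Phi$. So the two proofs share the same underlying identification; what the paper's factorization through $\eta$ buys is that the heavy case analysis is replaced by a short check at the Weyl-algebra level.

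One point in your first step deserves care. You pass from $T(\mathbf{b},V,S)=T\bigl((\mathcal{O}^{\psi_S})^{\exp_{\mathbf{bt}}},V\bigr)$ to the $\mathfrak{s}$-module on $\mathcal{O}^{\exp_{\mathbf{bt}}}\otimes V$ via $\omega_{V,S}$, i.e.\ to $T\bigl((\mathcal{O}^{\exp_{\mathbf{bt}}})^{\psi_S},V\bigr)$. The automorphisms $\psi_S$ and $\theta_{\mathbf{bt}}$ of $\mathcal{D}(n)$ do \emph{not} commute (for $i\in S$ one has $\psi_S\theta_g(t_i)=\partial_i$ but $\theta_g\psi_S(t_i)=\partial_i+b_i$), and the two twisted $\mathcal{D}(n)$-modules are genuinely non-isomorphic: in one $t_i$ acts locally nilpotently, in the other it does not. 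Your computations in steps 2 and 3, and the paper's own proof, are in fact carried out on the model obtained from $\omega_{V,S}$ acting on $\mathcal{O}^{\exp_{\mathbf{bt}}}$ (this is the one that is $\mathfrak h$-free), so the substance of your argument is fine; but the sentence invoking ``the isomorphism noted right before Lemma~\ref{lem-weight}'' does not by itself justify swapping the order of the two twists, and you should say explicitly which realization you are using.
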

\begin{proof}
The homomorphism in the theorem is the composition of $\omega_{V,S}: U({\mathfrak s}) \to {\mathcal D} (n) \otimes \End (V)$ and the homomorphism  ${\mathcal D} (n) \otimes \End (V) \to \End (\C[{\bf h}] \otimes V)$ where the latter is defined by the following maps:

 \begin{eqnarray*}
1 \otimes E_{ij} &\mapsto& \sigma_i\sigma_j^{-1} \otimes E_{ij}, \mbox{ for all } i,j,\\
t_i \otimes 1&\mapsto& - \frac{1}{b_i}\left( (h_i+1) \sigma_i^{-1} \otimes 1 - \sigma_i^{-1} \otimes E_{ii} \right), \mbox{ for  } i \notin S, \\ 
\partial_i \otimes 1&\mapsto& b_i \sigma_i  \otimes 1, \mbox{ for  } i \notin S\\  
t_i \otimes 1&\mapsto& \frac{1}{b_i}\left( h_i \sigma_i \otimes 1 - \sigma_i \otimes E_{ii} \right), \mbox{ for  } i \in S,\\  
\partial_i \otimes 1&\mapsto& b_i \sigma_i^{-1} \otimes 1, \mbox{ for  } i \in S.
\end{eqnarray*}

To prove that the above define a homomorphism, we make a repeated use of the identity $h_i\sigma_i-\sigma_ih_i = \sigma_i$.

The isomorphism $T({\bf b}, V, S) \to \C[{\bf h}] \otimes V$ is given explicitly by the formulas
$$
e^{\bf bt}{\bf t}^{\bf k} \otimes v \mapsto \prod_{i\in S} \binom{h_i - {\rm wt}(v)_i}{k_i}\frac{k_i!}{b_i^{k_i}} \prod_{j\notin S} \binom{-h_j + {\rm wt}(v)_j-1}{k_j}\frac{k_j!}{b_j^{k_j}} \otimes v,
$$
where $v$ is a weight vector of $V$ of weight ${\rm wt}(v)$.
\end{proof}
The above theorem applied in the particular case of one-dimensional $V$ recovers the classification result of Nilsson discussed in \S \ref{subsec-nilsson}. Indeed we have the following. 
\begin{corollary} \label{cor-nilsson}
We have an isomorphism $F_{\bf b} M_a^S \simeq T\left({\bf b}_S, V_{a+1}, \llbracket n\rrbracket \setminus S\right)$, where $({\bf b}_S)_i = b_i$ for $i \in S$ and  $({\bf b}_S)_j = - \frac{1}{b_j}$ for $j \notin  S$. Hence, every $\h$-free ${\mathfrak s}$-module of rank $1$ is an exponential tensor module $T({\bf b}, V,S)$ or a $\tau$-twist of such module.
\end{corollary}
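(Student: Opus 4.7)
The plan is to apply Theorem~\ref{thm-h-free} to $T(\mathbf{b}_S, V_{a+1}, \widetilde{S})$ with $\widetilde{S} = \llbracket n \rrbracket \setminus S$, simplify the resulting presentation using the fact that $V_{a+1}$ is one-dimensional, and verify that the simplified formulas coincide with the $\varphi_{\mathbf{b}}$-twisted action on Nilsson's $M_a^S$ given in \S\ref{subsec-nilsson}. Since $V_{a+1} = (a+1)\tr$, we have $1 \otimes E_{ii} = a+1$ and $1 \otimes E_{ij} = 0$ for $i \ne j$, so every right-hand side in Theorem~\ref{thm-h-free} collapses to a $\mathbb{C}[\mathbf{h}]$-polynomial multiple of a single shift $\sigma_i \sigma_j^{\pm 1}$, which is exactly the form of the formulas defining $M_a^S$. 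To make sense of $\varphi_{\mathbf{b}}$ for $\mathbf{b} \in (\mathbb{C}^\ast)^n$, one extends $\mathbf{b}$ by $b_{n+1} = -1$; the verification below confirms this as the correct normalization.

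The core computation is an eight-case check indexed by the location of $i, j$ relative to $S$. For the representative case $i, j \in S$, Theorem~\ref{thm-h-free} (applied to the ``$i, j \notin S$'' entry, since here $i, j \notin \widetilde{S}$) specializes to
\[
e_{ij} \mapsto \frac{(\mathbf{b}_S)_i}{(\mathbf{b}_S)_j} \bigl( (h_j + 1) - (a+1) \bigr) \sigma_i \sigma_j^{-1} = \frac{b_i}{b_j} (h_j - a) \sigma_i \sigma_j^{-1},
\]
which coincides with $\varphi_{\mathbf{b}}$ applied to Nilsson's $(h_j - a)\sigma_i \sigma_j^{-1}$. The technically most delicate case is $e_{ij}$ with $i \notin S$, $j \in S$: here the four $\End(V)$-terms of Theorem~\ref{thm-h-free} telescope as
\[
h_i(h_j+1) - (a+1)(h_j+1) - (a+1)h_i + (a+1)^2 = (h_i - a - 1)(h_j - a),
\]
and the prefactor $\tfrac{-1}{(\mathbf{b}_S)_i (\mathbf{b}_S)_j}$ simplifies via $(\mathbf{b}_S)_i = -1/b_i$, $(\mathbf{b}_S)_j = b_j$ to $b_i/b_j$, matching the twist of $(h_i-a-1)(h_j-a)\sigma_i\sigma_j^{-1}$. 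Likewise, for $e_{i,n+1}$ with $i \in S$ one observes that the sum $-\sum_{j \notin \widetilde{S}} (\mathbf{b}_S)_j \sigma_i \otimes E_{ij}$ has a nonvanishing $j = i$ contribution $-b_i(a+1)\sigma_i$, which combines with the main term $-b_i(\sum_j h_j - 1)\sigma_i$ to give $-b_i(\sum_j h_j + a)\sigma_i$, i.e.\ $(b_i/b_{n+1})$ times Nilsson's formula with $b_{n+1}=-1$. The remaining cases $e_{n+1,j}$, $e_{i,n+1}$ with $i\notin S$, and the pairs $(i\in S, j\notin S)$, $(i,j\notin S)$ are analogous and require no new ideas.

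Once the isomorphism is established, the second assertion of the corollary is immediate from the classification recalled in \S\ref{subsec-nilsson}: every rank-one $\mathfrak{h}$-free $\mathfrak{s}$-module is isomorphic to some $F_{\mathbf{a}}(M_b^S)$ or $F_\tau F_{\mathbf{a}}(M_b^S)$, and the first part of the corollary rewrites each $F_{\mathbf{a}}(M_b^S)$ as the exponential tensor module $T(\mathbf{a}_S, V_{b+1}, \llbracket n \rrbracket \setminus S)$. The main obstacle is the purely algebraic bookkeeping in the eight-case verification; conceptually, the shift $a \leadsto a+1$ absorbs the $-\mathbf{1}$ twist built into $\omega_V$ (the $-1 \otimes 1$ term in $h_k$ for $k \notin S$), and the swap $S \leadsto \widetilde{S}$ reflects the Fourier transform $\psi_{\widetilde{S}}$ implicit in $\omega_{V, \widetilde{S}}$.
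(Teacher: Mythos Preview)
Your approach is correct and is exactly what the paper intends: the corollary is stated immediately after Theorem~\ref{thm-h-free} with the remark that it follows by specializing to one-dimensional $V$, and you have carried out that specialization in detail, including the normalization $b_{n+1}=-1$ needed to interpret $F_{\mathbf b}$ for $\mathbf b\in(\mathbb C^*)^n$. The paper gives no further argument, so your verification is more explicit than, but identical in spirit to, the paper's own.
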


\begin{remark}
The maps  in the proof of Theorem \ref{thm-h-free} that define the homomorphism  ${\mathcal D} (n) \otimes \End (V) \to \End (\C[{\bf h}] \otimes V)$ can be considered as the composition $\beta\alpha$ of the maps $\alpha$ and $\beta$ defined as follows. Take for simplicity $S = \emptyset$ and $b_i = 1$ for all $i$. Then $t_i \mapsto \sigma_i$, $\partial_i \mapsto (h_i+1)\sigma_i^{-1}$ define a homomorphism $\beta: D_{\llbracket n \rrbracket}^+ {\mathcal D} (n) \to   \End (\C[{\bf h}])$. Note that $D_{\llbracket n \rrbracket}^+ {\mathcal D} (n)$ is nothing but the algebra of polynomial differential operators on $\C [t_1^{\pm 1},...,t_n^{\pm 1}]$. On the other hand $1 \otimes E_{ij} \mapsto t_it_j^{-1} \otimes E_{ij}$,  $t_i  \otimes 1 \mapsto - \partial_i \otimes 1 + t_i^{-1} \otimes E_{ii}$, $\partial_i \otimes 1 \mapsto t_i \otimes 1$, define an automorphism $\alpha$ of $D_{\llbracket n \rrbracket}^+{\mathcal D} (n) \otimes \End (V)$. It is worth noting that the homomorphism $\alpha \omega_{V,\emptyset}$ leads to another explicit presentation of $U({\mathfrak s})$ in terms of differential operators. This new presentation has the advantage that in the corresponding coherent family $\bigoplus_{\lambda \in \C^n}{\bf t}^{\lambda} \C [{\bf t}^{\pm 1}]\otimes V$, the weight vectors of weight $\lambda$ are precisely ${\bf t}^{\lambda} \otimes v$, $v \in V$.
\end{remark}
\begin{remark}
The isomorphism map  $T({\bf b}, V, S) \to \C[{\bf h}] \otimes V$  in the proof of Theorem \ref{thm-h-free} can be expressed in terms of generating series. Namely, by considering the coefficients of ${\bf x}^{\bf k}$ in the following map:
$$
\exp(\sum_{\ell=1}^nb_\ell t_\ell(x_\ell+1) ) \otimes v \mapsto \prod_{i \in S} (1+x_i)^{h_i-{\rm wt}(v)_i}  \prod_{j \notin S} (1+x_j)^{-h_j+{\rm wt}(v)_j-1} \otimes v
$$
\end{remark}

\begin{proposition}
The following isomorphism of coherent ${\mathfrak s}$-families hold: 
$${\mathcal W} (T({\bf b}, V, S) ) \simeq {\mathcal T}(V, \llbracket n\rrbracket \setminus S ).$$
\end{proposition}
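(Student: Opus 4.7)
The approach is to construct an explicit isomorphism between corresponding weight spaces and verify its $\mathfrak{sl}(n+1)$-equivariance using the generator formulas of Theorem \ref{thm-h-free}. By Theorem \ref{thm-h-free}, $T(\mathbf{b}, V, S)$ is realized on $\mathbb{C}[\mathbf{h}] \otimes V$ with $h_k$ acting as multiplication by $h_k \otimes 1$, so the $\lambda$-weight space of $\mathcal{W}(T(\mathbf{b}, V, S))$ is $(\mathbb{C}[\mathbf{h}] \otimes V)/\ker(\overline{\lambda})(\mathbb{C}[\mathbf{h}] \otimes V)$ with basis $\{[1 \otimes v]_\lambda\}$ indexed by a weight basis of $V$. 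Setting $S^c := \llbracket n \rrbracket \setminus S$ and reading off the $h_k$-formulas of $\omega_{V, S^c}$ in Proposition \ref{omega-v-s}, the $\lambda$-weight space of $\mathcal{T}(V, S^c)$ is spanned by $\{\mathbf{t}^{\mathbf{p}(\lambda, w)} \otimes v\}$ with $w = {\rm wt}(v)$ and
\[
p_k(\lambda, w) = \lambda_k - w_k \text{ for } k \in S^c, \qquad p_k(\lambda, w) = w_k - \lambda_k - 1 \text{ for } k \in S.
\]
Both weight spaces have dimension $\dim V$, so the correspondence $[1 \otimes v]_\lambda \leftrightarrow \mathbf{t}^{\mathbf{p}(\lambda, w)} \otimes v$ is a candidate bijection.

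The plan is to define $\phi \colon \mathcal{W}(T(\mathbf{b}, V, S)) \to \mathcal{T}(V, S^c)$ by
\[
\phi([1 \otimes v]_\lambda) := c(\lambda, w)\, \mathbf{t}^{\mathbf{p}(\lambda, w)} \otimes v,
\]
where $c(\lambda, w) = \prod_{k \in S^c} b_k^{w_k - \lambda_k} \prod_{k \in S} (-b_k)^{\lambda_k - w_k + 1}$ is well-defined, within each $\mathfrak{h}^* / \mathbb{Z}\Delta$-orbit, relative to a fixed base point. The role of $c$ is to arrange that the ratios $c(\lambda, w)/c(\lambda + \alpha, w)$ and $c(\lambda, w)/c(\lambda + \alpha, w + \alpha)$ exactly equal the discrepancy between the coefficients produced by Theorem \ref{thm-h-free} in $\mathcal{W}$ and those produced by $\omega_{V, S^c}$ in $\mathcal{T}$; this discrepancy is a monomial in the $b_k$ (such as $b_i/b_j$, $-b_i b_j$, or $-1/(b_i b_j)$) depending on the $S$-membership of the root indices. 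The signs $(-1)^{\lambda_k - w_k + 1}$ in the $S$-factors are naturally motivated by Lemma \ref{lem-loc-exp}: the identification $P^{\psi_{S^c}} \simeq D_{(S^c)}^+(P)$ goes through $\mathbf{t}_{S^c}^{\mathbf{k}} p \mapsto \partial_{S^c}^{\mathbf{k}}(\mathbf{t}_{S^c}^{-\mathbf{1}}) p$, which produces a $(-1)^{\sum k_i}$-factor in the basis change.

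Equivariance for $h_k$ is immediate from the formulas in Theorem \ref{thm-h-free}. For each root vector appearing in Theorem \ref{thm-h-free}, applying the formula to $[1 \otimes v]_\lambda$ yields a polynomial in $\mathbf{h}$ (since $\sigma_i \sigma_j^{-1}$ fixes the constant $1$), which upon evaluation in $\mathcal{W}^{\lambda + \alpha}$ at the shifted weight $\lambda + \alpha$ matches---up to the $c$-ratio above---the coefficient produced by $\omega_{V, S^c}$ on $\mathbf{t}^{\mathbf{p}(\lambda, w)} \otimes v$. The verification splits into subcases according to the $S$-membership of the root indices, mirroring the case split in both Theorem \ref{thm-h-free} and Proposition \ref{omega-v-s}. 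The main obstacle is the bookkeeping for $e_{i, n+1}$, whose Theorem \ref{thm-h-free} formula is the longest (with sums over $p \in S$ and $j \notin S$ and products $E_{ip} E_{pp}$); the individual terms there reassemble, after the shift-operator evaluation at the weight $\lambda + \varepsilon_i$, into exactly the summands of $\omega_{V, S^c}(e_{i, n+1})$. Once all cases are checked, $\phi$ is an $\mathfrak{sl}(n+1)$-isomorphism carrying weight spaces bijectively onto weight spaces, hence an isomorphism of coherent families.
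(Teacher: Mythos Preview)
Your approach coincides with the paper's: both write down an explicit map $[1\otimes v]_\lambda\mapsto c(\lambda,w)\,\mathbf t^{\mathbf p(\lambda,w)}\otimes v$ with the same exponents $\mathbf p(\lambda,w)$, and appeal (tacitly in the paper, explicitly in your outline) to the formulas of Theorem~\ref{thm-h-free} and Proposition~\ref{omega-v-s} for equivariance; the paper's argument is in fact terser than yours, recording only the map. One small correction to your scalar: tracking $e_{n+1,j}$ through both presentations shows the required ratio $c(\lambda-\varepsilon_j,w)/c(\lambda,w)$ equals $-b_j$ for $j\in S^c$ and $1/b_j$ for $j\in S$, so the $(-1)$-factors in $c$ belong to the $S^c$-indices rather than the $S$-indices (for instance $\prod_{k\in S^c}(-b_k)^{w_k-\lambda_k}\prod_{k\in S}b_k^{\lambda_k-w_k+1}$ works); with that swap your verification plan goes through as described.
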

\begin{proof}
Let $\dim V = N$ and let $(v_1,...,v_N)$ be a basis of weight vectors of $V$. Denote by ${\bf 1}_{\ell} \in  \C[{\bf h}]^{\oplus N}$  the element $(0,...,1,...,0)$, where $1$ is in the $\ell$th position. Let $v_{\lambda,\ell} = {\bf 1}_{\ell} + \ker(\bar{\lambda}) T({\bf b}, V, S)$ for $\lambda \in \h^*$ and $1 \leq \ell \leq N$. Then the explicit isomorphism ${\mathcal W} (T({\bf b}, V, S) ) \to {\mathcal T}(V, \llbracket n\rrbracket \setminus S )$ is given by the formulas
$$
v_{\lambda,\ell} \mapsto \prod_{i\in S} (b_it_i)^{-\lambda_i + {\rm wt}(v_\ell)_{i}  -1}\prod_{j\notin S} (b_jt_j)^{\lambda_j -{\rm wt}(v_\ell)_{j}} \otimes v_\ell.
$$

\end{proof}

\subsection{The case $\deg g = 1$ and $S =\emptyset$: weight modules relative to other Cartan subalelgebras} In this subsection we fix $\deg g = 1$ and $S =\emptyset$. As usual, we let $g(t) =  {\bf b} {\bf t} = \sum_{i=1}^nb_it_i$, but now we let $b_i$ be any (possibly zero) complex numbers. Consider the inner automorphism $\bar{\theta}_{\bf b} = \exp(- \ad(\sum_{j=1}^n b_j e_{n+1,j}))$ of ${\mathfrak s}$. In this case the module $T(g,V,\emptyset)$ is nothing but the $\bar{\theta}_{\bf b}$-twist of $T(V,\emptyset)$. In particular, we obtain the following.
\begin{corollary}
If $\deg g = 1$, then the modules $T({\bf b},V,\emptyset)$ are weight modules relative to the Cartan subalgebra $\h' = \bar{\theta}_{\bf b} (\h)$.
\end{corollary}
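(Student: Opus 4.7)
The plan is to reduce the corollary to the claim made just before it: that $T({\bf b},V,\emptyset)$ is isomorphic to the $\bar\theta_{\bf b}$-twist $T(0,V,\emptyset)^{\bar\theta_{\bf b}}$ of the $g=0$ module. Once that identification is established, the corollary becomes the general fact that twisting a weight module by an automorphism $\phi$ of $\mathfrak s$ yields a weight module for the transported Cartan subalgebra.

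First I would verify the identity
\[
\omega_V\circ \bar\theta_{\bf b} \;=\; (\theta_g\otimes 1)\circ \omega_V \quad \text{as maps } \mathfrak s\to \mathcal D(n)\otimes U(\mathfrak{gl}(n)),
\]
where $g=\sum_i b_i t_i$. Set $X=\sum_{j=1}^n b_j e_{n+1,j}$, so $\bar\theta_{\bf b}=\exp(-\ad X)$. Since $\omega_V$ extends to an associative algebra homomorphism, it intertwines $\ad$-actions, i.e.\ $\omega_V(\ad(X)^k(x))=\ad(\omega_V(X))^k(\omega_V(x))$. Proposition~\ref{omega-v-s} at $S=\emptyset$ gives $\omega_V(e_{n+1,j})=-t_j\otimes 1$, hence $\omega_V(X)=-g\otimes 1$. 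Because $g\otimes 1$ commutes with $1\otimes U(\mathfrak{gl}(n))$, the operator $\ad(g\otimes 1)$ on $\mathcal D(n)\otimes U(\mathfrak{gl}(n))$ acts as $\ad(g)\otimes 1$; and a direct computation using $[g,\partial_i]=-b_i$ and $[g,t_i]=0$ shows that $\exp(\ad(-g))$ equals $\theta_g$ on $\mathcal D(n)$. Summing the series yields the claimed identity, and feeding it back into the definitions identifies the two $\mathfrak s$-actions on the common underlying space $\mathcal O\otimes V$ (using that $\mathcal O^{\exp_g}$ is, by construction, $\mathcal O$ with the $\theta_g$-twisted $\mathcal D(n)$-action).

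With this identification in hand, the corollary follows quickly. By Lemma~\ref{lem-weight}, $T(0,V,\emptyset)$ is an $\mathfrak h$-weight module, say $T(0,V,\emptyset)=\bigoplus_{\lambda}M^{\lambda}$. For $m\in M^{\lambda}$ and $h'\in \mathfrak h':=\bar\theta_{\bf b}(\mathfrak h)$, write $h'=\bar\theta_{\bf b}(h)$ with $h\in\mathfrak h$; then in $T({\bf b},V,\emptyset)=T(0,V,\emptyset)^{\bar\theta_{\bf b}}$ we have $h'\cdot m=\bar\theta_{\bf b}^{-1}(h')\cdot_{\mathrm{old}} m=h\cdot_{\mathrm{old}}m=\lambda(h)\,m$. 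Hence $m$ is an $\mathfrak h'$-weight vector of weight $\lambda\circ\bar\theta_{\bf b}^{-1}$, and the $\mathfrak h$-decomposition of $T(0,V,\emptyset)$ transports to the $\mathfrak h'$-weight decomposition of $T({\bf b},V,\emptyset)$.

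The main obstacle is the sign- and direction-bookkeeping: reconciling the convention $x\cdot_{\mathrm{new}}m=\phi(x)\cdot_{\mathrm{old}}m$ for $M^\phi$, the sign in $\bar\theta_{\bf b}=\exp(-\ad X)$, and the orientation of $\mathfrak h'=\bar\theta_{\bf b}(\mathfrak h)$ versus $\bar\theta_{\bf b}^{-1}(\mathfrak h)$. Once these are fixed, the substance is simply that the inner automorphism $\bar\theta_{\bf b}$ on $\mathfrak s$ implements the exponentiation automorphism $\theta_g$ on $\mathcal D(n)$ through the presentation $\omega_V$, and everything reduces to Lemma~\ref{lem-weight}.
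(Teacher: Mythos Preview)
Your approach is exactly the paper's: the paper gives no separate proof but simply asserts, in the sentence preceding the corollary, that $T({\bf b},V,\emptyset)$ is the $\bar\theta_{\bf b}$-twist of $T(0,V,\emptyset)$, and declares the corollary an immediate consequence. Your write-up supplies the details the paper omits.

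There is one sign slip worth fixing. Your chain gives
\[
\omega_V\circ\bar\theta_{\bf b}=\omega_V\circ\exp(-\ad X)=\exp(-\ad(\omega_V(X)))\circ\omega_V=\exp(\ad(g\otimes 1))\circ\omega_V,
\]
and since $\exp(\ad g)(\partial_i)=\partial_i-b_i$, this is $(\theta_{-g}\otimes 1)\circ\omega_V$, not $(\theta_g\otimes 1)\circ\omega_V$. (You correctly computed $\exp(\ad(-g))=\theta_g$, but that is the inverse of what the chain produces.) Consequently, under your stated convention $x\cdot_{\mathrm{new}}m=\phi(x)\cdot_{\mathrm{old}}m$, the correct identification is $T({\bf b},V,\emptyset)\simeq T(0,V,\emptyset)^{\bar\theta_{\bf b}^{-1}}$. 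Your second paragraph contains a compensating slip---you write $h'\cdot m=\bar\theta_{\bf b}^{-1}(h')\cdot_{\mathrm{old}}m$, whereas your convention for $M^{\bar\theta_{\bf b}}$ would give $\bar\theta_{\bf b}(h')\cdot_{\mathrm{old}}m$---so the final answer $\h'=\bar\theta_{\bf b}(\h)$ comes out right. You already flagged this bookkeeping as the main hazard; once the two signs are aligned the argument is clean. A quick sanity check: $\bar\theta_{\bf b}(h_k)=h_k-b_k e_{n+1,k}$, and one verifies directly from the $S=\emptyset$ formulas that $(\theta_g\otimes 1)\bigl(\omega_V(h_k-b_k e_{n+1,k})\bigr)=\omega_V(h_k)$, confirming $\h'=\bar\theta_{\bf b}(\h)$.
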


\subsection{The case $\deg g = 1$  and Whittaker modules} We conclude this section by identifying some exponential tensor modules as Whittaker modules. The classical Whittaker modules were introduced by Kostant in \cite{K} and correspond to the pair $({\mathfrak s}, \mathfrak n^+)$, where $\mathfrak n^+$ is the nilradical of a Borel subalegbra of ${\mathfrak s}$. More general Whittaker modules and pairs have been studied recently. In particular,  we will refer  to the constructions in \cite{BM} and \cite{BO} below. 

We fix once again $g(t) =  {\bf b} {\bf t}$. Let $S \subset  \llbracket n\rrbracket$ and $\mathfrak n_S$ be the Lie subalgebra of ${\mathfrak s}$ spanned by $e_{ij}$, $i \notin S$, $j \in S$. Note that $\mathfrak n_S$ is an abelian Lie algebra and it is the nilradical of a maximal parabolic subalgebra of ${\mathfrak s}$. Also, if $S$ is considered as a subset of $ \llbracket n+1\rrbracket$, the root system of $\mathfrak n_S$ is precisely $\Sigma_{S}$,  see \eqref{sigma-s}.

 Let us first consider $S = \emptyset$. Then the $\mathcal D (n)$-module $P = e^{{\bf b} {\bf t}} \mathbb C[\bf t]$ is a Whittaker module of type ${\bf b}$ in the sense of \cite{BO}. More precisely, the vector $w=e^{{\bf b} {\bf t}}$ satisfies the two properties: $\partial_i w = b_i w$ (i.e. $w$ is a Whittaker vector) and $P = \mathcal D (n) w$. However, as pointed out in \cite{Nil1}, the modules $T({\bf b},V,\emptyset)$ are not Whittaker modules. One reason for that is that they are free modules (of rank $\dim V$) over  $\mathfrak n_{\emptyset}$. In fact, every  $\mathfrak n_{\emptyset}$-free $\mathfrak s$-module of rank $1$ is isomorphic to $T({\bf b},V_a, \emptyset)$ for some ${\bf b}$ and $a$, see Theorem 3.2 in \cite{HCL}.

Let now $S \neq \emptyset$ and  set for convenience $b_{n+1} = 1$. Consider the homomorphism $\chi_{\bf b}: \mathfrak n_S \to \C$ defined by $\chi_{\bf b} (e_{ij}) = - b_ib_j$. Then $T(g,V,S)$ is a Whittaker module, in the sense of \cite{BM}. More precisely, $({\mathfrak s}, \mathfrak n_S)$ is a Whittaker pair and all vectors of the form $w = e^{{\bf b} {\bf t}} \otimes v$ with $E_{ij}v=0$ for $i \notin S$, $j \in S$, are Whittaker vectors of $T(g,V,S)$. Namely, $(x  - \chi_{\bf b}(x)) (w) = 0$ for $x \in \mathfrak n_S$.

\end{document}